\def\cvd{~\vbox{\hrule\hbox{%
     \vrule height1.3ex\hskip0.8ex\vrule}\hrule } }
\newcommand{\join}{\vee}
\DeclareMathOperator{\diam}{diam}
\DeclareMathOperator{\cc}{cc}
\DeclareMathOperator{\mr}{mr}
\DeclareMathOperator{\M}{M}
\begin{document}



\bibliographystyle{plain}
\title{Minimum number of distinct eigenvalues of graphs
\thanks{Received by the editors on Month x, 200x.
Accepted for publication on Month y, 200y   Handling Editor: .}}

\author{
Bahman Ahmadi\thanks{Department of Mathematics and Statistics,
University of Regina, Regina, Saskatchewan, S4S 0A2.
(bahman.ahmadi@gmail.com, fatemeh.naghipour@gmail.com). }
\and
Fatemeh Alinaghipour\footnotemark[2]
\and
Michael S. Cavers
\thanks{Department of Mathematics and Statistics, University of Calgary,
Calgary, AB, T2N 1N4. (michael.cavers@ucalgary.ca)}
\and
Shaun Fallat\thanks{Department of Mathematics and Statistics,
University of Regina, Regina, Saskatchewan, S4S 0A2.Research supported in part by an NSERC research grant. (shaun.fallat@uregina.ca)} 
\and
Karen Meagher\thanks{Department of Mathematics and Statistics,
University of Regina, Regina, Saskatchewan, S4S 0A2. Research supported in part by an NSERC research grant. (karen.meagher@uregina.ca)}
\and 
Shahla Nasserasr\thanks{Department of Mathematics and Statistics,
University of Regina, Regina, Saskatchewan, S4S 0A2. Research supported by PIMS and Fallat's NSERC Research Grant. (Shahla.Nasserasr@uregina.ca)}
}

\pagestyle{myheadings}
\markboth{B. Ahmadi, F. Alinaghipour, M.S. Cavers, S. Fallat, K. Meagher, S. Nasserasr}{Minimum Number of Eigenvalues}
\maketitle

\begin{abstract}
The minimum number of distinct eigenvalues, taken over all real symmetric matrices compatible
with a given graph $G$, is denoted by $q(G)$. Using other parameters related to $G$, bounds for $q(G)$ are proven and then applied to deduce further properties of $q(G)$. It is shown that there is a great number of graphs $G$ for which $q(G)=2$. For some families of graphs, such as the join of a graph with itself, complete bipartite graphs, and cycles, this minimum value is obtained. Moreover, examples of graphs $G$ are provided to show that adding and deleting edges or vertices can dramatically change the value of $q(G)$. Finally, the set of graphs $G$ with $q(G)$ near the number of vertices is shown to be a subset of known families of graphs with small maximum multiplicity.
\end{abstract}

\begin{keywords}
Symmetric matrix, Eigenvalue, Join of graphs, Diameter, Trees, Bipartite graph,
Maximum multiplicity.
\end{keywords}
\begin{AMS}
05C50, 15A18. 
\end{AMS}


\section{Introduction}

Suppose $G=(V,E)$ is a simple graph with vertex set
$V=\{1,2,\ldots,n\}$ and edge set $E$. To a graph $G$, we associate
the collection of real $n \times n$ symmetric matrices defined by
\[ 
S(G) = \{ A : A=A^{T}, \; {\rm for} \; i \neq j, \;  a_{ij} \neq 0 \Leftrightarrow \{i,j\} \in E\}.
\]
Note that, the main diagonal entries of $A$ in $S(G)$ are free to be
chosen. 

For a square matrix $A$, we let $q(A)$ denote the number of
distinct eigenvalues of $A$. For a graph $G$, we define
\[ q(G) = \min \{ q(A) : A \in S(G) \}.\] It is clear that for any
graph $G$ on $n$ vertices, $1 \leq q(G) \leq n$. Furthermore, it is not
difficult to show that for a fixed $n$, there exists a graph $G$ on $n$
vertices with $q(G)=k$, for each $k=1,2,\ldots,n$, see Corollary~\ref{cor:anynk}
for further details.

The class of matrices $S(G)$ has been of interest to many researchers
recently (see \cite{FH, FH2} and the references therein), and there
has been considerable development of the parameters $\M(G)$ (maximum
multiplicity or nullity over $S(G)$) and $\mr(G)$ (minimum rank over
$S(G)$) and their positive semidefinite counterparts, see, for example, the works
\cite{psd, FH, FH2}. Furthermore, as a consequence interest has grown
in connecting these parameters to various combinatorial properties of
$G$.  For example, the inverse eigenvalue problem for graphs (see
\cite{Hog05}) continues to receive considerable and deserved
attention, as it remains one of the most interesting unresolved issues
in combinatorial matrix theory.

In the context of the $(0,1)$-adjacency matrix, $A(G)$, it
is well known that $q(A(G))$ is at least one more than the diameter of $G$
(denoted by ${\rm diam}(G)$) (see \cite{BR}). This result was
generalized to the case of trees, by observing that $q(A) \geq {\rm
  diam}(G) +1$, whenever $A \in S(G)$ is an entry-wise nonnegative
matrix (see \cite{LJ}). Thus if $G$ is a
tree, it is known that $q(G) \geq {\rm diam}(G) +1$. However, it has been
demonstrated that while this inequality is tight for some trees
(e.g. path, star), equality need not hold for all trees (see \cite{BF}
,\cite{KS2}, and also \cite{RUofWy}).

Our main interest lies in studying the value of $q(G)$ for arbitrary
graphs, and as such is a continuation of \cite{F} by de Fonseca. As with many studies of this kind,
moving beyond trees leads to a number of new and interesting
difficulties, and numerous exciting advances. It is clear that
knowledge of $q(G)$ for a graph $G$ will impact current studies of the
inverse eigenvalue problem for graphs, and, in particular, the
parameters $\M(G)$ and $\mr(G)$.

Our work has been organized into a number of components. The next section contains
necessary background information and various preliminary-type results
including connections between $q(G)$ and existing graph parameters as
well as the graphs attaining the extreme values of $q(G)$. The following
section provides a simple but surprisingly useful lower bound for $q(G)$.
%
%
Section~\ref{sec:q=2} is devoted to studying the
graphs for which $q(G)=2$, which is continued into Section 5, whereas the next section considers bipartite graphs
and certain graph products. The final
two sections focus on the graphs for which $q(G)=|V(G)|-1$ and some possible further work.

\section{Preliminary Results}

To begin we list some basic results about the minimum number of distinct
eigenvalues for graphs. In this work, we use $K_n, K_{m,n}$ and $I_n$ to denote the complete graph on $n$ vertices, complete bipartite graph with parts of sizes $m,n$, and the identity matrix of order $n$, respectively. The notations $M_{m,n}, M_n$ are used for the set of real matrices of order $m\times n$ and $n$, respectively. For $A\in M_n$, the set of eigenvalues of $A$ is denoted by $\sigma(A)$. For graphs $G$ and $H$, $G\cup H$ denotes the graph with vertex set $V(G)\cup V(H)$ and edges $E(G)\cup E(H)$, and is called the {\em union of $G$ and $H$}. 

\begin{lemma}\label{lem:q=1}
For a graph $G$, $ q(G)=1$ if and only if $G$ has no edges. 
\end{lemma}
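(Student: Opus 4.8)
The plan is to prove the biconditional in Lemma~\ref{lem:q=1} by establishing each direction separately. The statement characterizes exactly when a graph admits a symmetric matrix with a single distinct eigenvalue.

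For the reverse direction, suppose $G$ has no edges. Then every matrix $A\in S(G)$ is diagonal, since the off-diagonal constraint forces $a_{ij}=0$ for all $i\neq j$. I would simply take $A=I_n$, which lies in $S(G)$ and has the single eigenvalue $1$, so $q(G)=1$.

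For the forward direction, suppose $q(G)=1$, so there is some $A\in S(G)$ with exactly one distinct eigenvalue $\lambda$. The key observation is that a real symmetric matrix is diagonalizable, so $A=U\,\mathrm{diag}(\lambda,\dots,\lambda)\,U^{T}=\lambda UU^{T}=\lambda I_n$. Thus $A$ is a scalar multiple of the identity, and in particular all its off-diagonal entries vanish. By the definition of $S(G)$, this forces $E=\emptyset$, i.e. $G$ has no edges.

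The argument is short and I expect no serious obstacle; the only point requiring care is the forward direction, where one must invoke the spectral theorem (every real symmetric matrix is orthogonally diagonalizable) to conclude that having a single eigenvalue forces $A=\lambda I_n$. Without symmetry this step fails, so it is essential to use that $A\in S(G)\subseteq M_n$ is symmetric.
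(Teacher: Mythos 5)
Your proof is correct and follows essentially the same route as the paper's: the forward direction uses that a real symmetric matrix with a single eigenvalue must be $\lambda I_n$ (the paper states this without explicitly citing the spectral theorem), and the reverse direction exhibits a diagonal matrix with one eigenvalue. No issues.
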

\begin{proof} If $q(G)=1$, then there is an $A\in
S(G)$ with exactly one eigenvalue, this matrix is a scalar
multiple of the identity matrix, thus the graph $G$ is the empty
graph. Clearly, if $G$ is empty graph, then $ q(G)=1$. \end{proof}

\begin{lemma}
For any $n\geq 2$, we have $q(K_n) = 2$.
\end{lemma}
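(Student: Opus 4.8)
The plan is to exhibit a single matrix $A \in S(K_n)$ with exactly two distinct eigenvalues, and then invoke Lemma~\ref{lem:q=1} to rule out the value $1$. Since $K_n$ has edges (as $n \geq 2$), Lemma~\ref{lem:q=1} tells us $q(K_n) \neq 1$, so $q(K_n) \geq 2$. It therefore suffices to produce one compatible matrix attaining $q(A) = 2$.

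For the construction, the natural candidate is the $(0,1)$-adjacency matrix itself, $A = J_n - I_n$, where $J_n$ is the all-ones matrix. This matrix lies in $S(K_n)$ because every off-diagonal entry equals $1$ (so all edges of $K_n$ are present and no off-diagonal entry vanishes), and its diagonal entries are $0$, which is permitted since diagonal entries are free. I would then compute its spectrum directly: $J_n$ has eigenvalue $n$ with multiplicity $1$ (eigenvector the all-ones vector $\mathbf{1}$) and eigenvalue $0$ with multiplicity $n-1$ (eigenspace $\mathbf{1}^{\perp}$). Hence $A = J_n - I_n$ has eigenvalue $n-1$ with multiplicity $1$ and eigenvalue $-1$ with multiplicity $n-1$.

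Since $n \geq 2$, these two eigenvalues $n-1$ and $-1$ are distinct, so $q(A) = 2$ and therefore $q(K_n) \leq 2$. Combining this upper bound with the lower bound $q(K_n) \geq 2$ gives $q(K_n) = 2$, completing the argument.

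I do not anticipate any genuine obstacle here: the only point requiring a moment of care is verifying that the chosen matrix is correctly membership-tested against $S(K_n)$ — namely that every off-diagonal entry is nonzero (forcing the complete edge set) — and that the eigenvalues are genuinely distinct, which uses the hypothesis $n \geq 2$. Both checks are routine. (One could alternatively diagonalize $J_n$ via its rank-one structure $J_n = \mathbf{1}\mathbf{1}^{T}$, but the eigenvalue computation above is the most direct route.)
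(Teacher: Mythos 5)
Your proposal is correct and follows exactly the paper's argument: use the adjacency matrix of $K_n$ (with its two distinct eigenvalues $n-1$ and $-1$) to get $q(K_n)\leq 2$, and invoke Lemma~\ref{lem:q=1} for the lower bound. The only difference is that you spell out the spectrum of $J_n-I_n$ explicitly, which the paper takes as known.
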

\begin{proof} The adjacency matrix of $K_n$ has two distinct eigenvalues, so
$q(K_n)\leq 2$ and by Lemma \ref{lem:q=1}
$q(K_n)>1,$ which implies $q(K_n) = 2$.  \end{proof}

\begin{lemma}\label{lem:01}
 If $G$ is a non-empty graph, then for any two distinct real numbers $\mu_1, \mu_2$, there is an $A \in S(G)$ such that
  $q(A) = q(G)$ and $\mu_1,\mu_2\in \sigma(A)$.
\end{lemma}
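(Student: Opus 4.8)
The plan is to start from any matrix that already achieves the minimum $q(G)$ and then apply an affine transformation $x \mapsto \alpha x + \beta$ to its spectrum, chosen so that two prescribed eigenvalues land exactly at $\mu_1$ and $\mu_2$. First I would note that because $G$ is non-empty, Lemma~\ref{lem:q=1} gives $q(G) \geq 2$, so any minimizer has at least two distinct eigenvalues to work with. Fix $B \in S(G)$ with $q(B) = q(G)$, and let $\lambda_1 \neq \lambda_2$ be two of its distinct eigenvalues.

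The key observation is that $S(G)$ together with the value of $q$ is invariant under the affine substitution $A = \alpha B + \beta I$ whenever $\alpha \neq 0$: the off-diagonal entries of $A$ are $\alpha b_{ij}$, which vanish exactly when $b_{ij}$ does, so $A \in S(G)$; the diagonal entries remain free; and the eigenvalues of $A$ are precisely $\alpha\lambda + \beta$ as $\lambda$ ranges over $\sigma(B)$. Since $x \mapsto \alpha x + \beta$ is a bijection for $\alpha \neq 0$, it sends distinct eigenvalues to distinct eigenvalues, so $q(A) = q(B) = q(G)$. It then remains only to solve the linear system $\alpha \lambda_1 + \beta = \mu_1$, $\alpha \lambda_2 + \beta = \mu_2$ for $\alpha$ and $\beta$. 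Subtracting gives $\alpha = (\mu_1 - \mu_2)/(\lambda_1 - \lambda_2)$ and then $\beta = \mu_1 - \alpha\lambda_1$, after which $\mu_1, \mu_2 \in \sigma(A)$ by construction.

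I do not expect a serious obstacle here; the only point that must be checked is that the transformation is genuinely non-degenerate, that is, $\alpha \neq 0$. This is exactly where the two hypotheses are used: $\lambda_1 \neq \lambda_2$ (guaranteed by $q(G) \geq 2$, hence by $G$ being non-empty) makes the denominator nonzero, while $\mu_1 \neq \mu_2$ makes the numerator nonzero. Together they ensure $\alpha \neq 0$, so that $A \in S(G)$ and the spectrum has not been collapsed, completing the argument.
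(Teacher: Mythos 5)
Your proposal is correct and is essentially identical to the paper's proof: both take a minimizer $B \in S(G)$ with two distinct eigenvalues $\lambda_1 \neq \lambda_2$ and apply the affine map $A = \alpha B + \beta I$ with $\alpha = (\mu_1-\mu_2)/(\lambda_1-\lambda_2)$, which is exactly the matrix the paper writes down in closed form. Your added remarks on why $\alpha \neq 0$ and why two distinct eigenvalues exist (via Lemma~\ref{lem:q=1}) are correct and slightly more explicit than the paper's one-line argument.
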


{\em Proof.} Consider $B \in S(G)$ with $q(B) = q(G)$ and $\lambda_1,\lambda_2\in \sigma(B)$ with $\lambda_1\neq \lambda_2$. Then the matrix $A$ defined below satisfies $q(A) = q(B)$ and $\mu_1,\mu_2\in \sigma(A)$.
\[
\displaystyle A=\frac{\mu_1-\mu_2}{\lambda_1-\lambda_2} \left(B + \displaystyle \frac{\lambda_1\mu_2-\lambda_2\mu_1}{\mu_1-\mu_2}I\right). ~~~~~~ \cvd\]

\begin{corollary}\label{cor:union2}
  If $G$ and $H$ are non-empty graphs with $q(G) = q(H) = 2$, then
  $q(G \cup H)=2$. In particular, if $G$ is the union of non-trivial complete
  graphs, then $q(G) =2$.
\end{corollary}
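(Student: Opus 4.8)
The plan is to realize a matrix in $S(G\cup H)$ having only two distinct eigenvalues by taking a block-diagonal (direct sum) matrix whose two diagonal blocks lie in $S(G)$ and $S(H)$ respectively, and whose spectra coincide. First I would observe that, since the union is taken on disjoint vertex sets with no edges joining $V(G)$ to $V(H)$, any pair $A_G\in S(G)$ and $A_H\in S(H)$ assembles into a block-diagonal matrix $A_G\oplus A_H\in S(G\cup H)$, and this matrix satisfies $\sigma(A_G\oplus A_H)=\sigma(A_G)\cup\sigma(A_H)$. Hence the number of distinct eigenvalues of the combined matrix is controlled entirely by how the two individual spectra overlap.

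The crucial step is to force these two spectra to be equal. Fix any two distinct reals $\mu_1,\mu_2$. Because $q(G)=2$ and $G$ is non-empty, Lemma~\ref{lem:01} furnishes a matrix $A_G\in S(G)$ with $q(A_G)=2$ and $\mu_1,\mu_2\in\sigma(A_G)$; since $A_G$ has exactly two distinct eigenvalues, this forces $\sigma(A_G)=\{\mu_1,\mu_2\}$. Applying Lemma~\ref{lem:01} again, now to $H$, produces $A_H\in S(H)$ with $\sigma(A_H)=\{\mu_1,\mu_2\}$ as well. Then $A_G\oplus A_H\in S(G\cup H)$ has spectrum exactly $\{\mu_1,\mu_2\}$, so $q(G\cup H)\le 2$. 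As $G\cup H$ has at least one edge, Lemma~\ref{lem:q=1} gives $q(G\cup H)\ge 2$, and equality follows.

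For the ``in particular'' statement, I would write $G$ as a disjoint union of non-trivial complete graphs $K_{n_1},\dots,K_{n_k}$ with each $n_i\ge 2$, recall that $q(K_{n_i})=2$, and then iterate the first part of the corollary by induction on $k$ to conclude $q(G)=2$.

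I expect the only subtle point to be the alignment of the two spectra onto the common pair $\{\mu_1,\mu_2\}$: without this, the direct sum could exhibit up to four distinct eigenvalues. This alignment is exactly what Lemma~\ref{lem:01} is designed to supply, and once it is in place the direct-sum construction is entirely routine.
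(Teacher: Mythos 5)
Your proof is correct and matches the paper's intended argument exactly: the corollary is stated immediately after Lemma~\ref{lem:01} precisely because that lemma lets one align the spectra of matrices for $G$ and $H$ onto a common pair $\{\mu_1,\mu_2\}$ before taking the direct sum. The lower bound via Lemma~\ref{lem:q=1} and the induction for the union of complete graphs are likewise the standard steps the authors leave implicit.
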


The parameter $q$ is related to other parameters of graphs, such as 
the minimum rank of the graph.  

\begin{proposition}\label{mrandq}
For any graph $G$, we have $q(G)\leq \mr(G)+1.$
\end{proposition}
\begin{proof} Consider a matrix $A\in S(G)$ with the minimum possible rank,
$\mr(G)$. Then, $A$ has $\mr(G)$ nonzero eigenvalues, so the number of
distinct eigenvalues of $A$ is less than or equal to $\mr(G)+1$. \end{proof}

Clearly, any known upper bound on the minimum rank of a graph can be used as an upper bound for the value
of $q(G)$ for a graph $G$. For example, a {\em clique covering}
of a graph is a collection of complete subgraphs of the graph such
that every edge of the graph is contained in at least one of these
subgraphs.  Then the {\em clique covering number} of a graph is the
fewest number of cliques in a clique covering. This
number is denoted by $\cc(G)$. It is well known that for all graphs
$G$, $\mr(G) \leq \cc(G)$; see \cite{FH}, and thus we have the following corollary.

\begin{corollary}\label{cor:cliquecovering}
Let $G$ be a graph, then $q(G) \leq \cc(G) + 1.$
\end{corollary}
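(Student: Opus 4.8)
The plan is to combine the two ingredients already assembled in the excerpt: the inequality $q(G) \leq \mr(G)+1$ established in Proposition~\ref{mrandq}, and the well-known bound $\mr(G) \leq \cc(G)$ quoted from~\cite{FH}. First I would invoke Proposition~\ref{mrandq} to get $q(G) \leq \mr(G)+1$. Then, since both sides are monotone, substituting $\mr(G) \leq \cc(G)$ yields the chain $q(G) \leq \mr(G)+1 \leq \cc(G)+1$, which is precisely the claim. Because each step is a one-line substitution, there is essentially no computation and no genuine obstacle; the only point requiring attention is simply citing the minimum-rank bound correctly.

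If I preferred not to treat $\mr(G) \leq \cc(G)$ as a black box, I would instead give a self-contained construction. Fix a minimum clique cover $C_1,\dots,C_k$ with $k = \cc(G)$, and for each clique $C_i$ form a symmetric rank-one matrix $x_i x_i^{T}$ supported on $V(C_i)$, with the nonzero entries of $x_i$ chosen generically. Setting $B = \sum_{i=1}^{k} x_i x_i^{T}$ gives a matrix of rank at most $k$; for $\{u,v\} \in E(G)$ the entry $b_{uv}$ is nonzero because the cliques covering that edge contribute, while the generic choice prevents cancellation, and $b_{uv}=0$ for non-edges. Hence $B \in S(G)$ has at most $k$ nonzero eigenvalues, so at most $k+1 = \cc(G)+1$ distinct eigenvalues, giving $q(G) \leq \cc(G)+1$.

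The main (and only mild) obstacle in this direct route is justifying that the summed rank-one pieces do not produce a spurious zero in an off-diagonal position corresponding to an edge; this is handled by the standard genericity argument on the scalars defining the $x_i$, so no edge entry vanishes. Given that Proposition~\ref{mrandq} and the cited inequality are both available, however, I expect the short chaining argument to be the intended proof, with the explicit construction mentioned only as optional justification for the quoted fact.
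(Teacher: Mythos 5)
Your first argument---chaining $q(G)\leq \mr(G)+1$ from Proposition~\ref{mrandq} with the quoted bound $\mr(G)\leq \cc(G)$---is exactly how the paper obtains this corollary, and it is correct. The optional rank-one construction is just the standard proof of $\mr(G)\leq\cc(G)$ and is a fine (if unnecessary) supplement.
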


In Corollary~\ref{cor:anynk} a family of graphs is given
for which this bound holds with equality.

We conclude this section with the exact value of $q(C_n)$ where $C_n$
is a cycle on $n$ vertices.  This result can be derived
from~\cite{Ferg}, but in this section we will prove it using work from
\cite{FF}.

\begin{lemma}\label{lem:cycles} Let $C_n$ be the cycle on $n$ vertices. Then 
\[ 
q(C_n) = \left\lceil \frac{n}{2} \right\rceil.
\]
\end{lemma}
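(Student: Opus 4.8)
The plan is to establish matching lower and upper bounds. For the lower bound I would use the maximum multiplicity of a cycle. It is well known (see \cite{FH}) that $\mr(C_n)=n-2$, hence $\M(C_n)=n-\mr(C_n)=2$; that is, every eigenvalue of every $A\in S(C_n)$ has multiplicity at most $2$. Since an $n\times n$ matrix has $n$ eigenvalues counted with multiplicity, any $A\in S(C_n)$ must have at least $\lceil n/2\rceil$ distinct eigenvalues, giving $q(C_n)\geq\lceil n/2\rceil$. (Alternatively one can argue from distances: for vertices joined by a \emph{unique} shortest path of length $d$, the $(i,j)$ entry of $A^d$ is a single nonzero product of edge weights while it vanishes in all lower powers, so $I,A,\dots,A^d$ are linearly independent and $A$ has at least $d+1$ distinct eigenvalues; on $C_n$ a unique geodesic of length $\lfloor(n-1)/2\rfloor$ reproduces the same bound, and, unlike the diameter bound, this needs no nonnegativity.)

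For the upper bound it suffices to exhibit, for each $n$, a matrix in $S(C_n)$ attaining $\lceil n/2\rceil$ distinct eigenvalues. When $n$ is odd the adjacency matrix already works: its eigenvalues are $2\cos(2\pi k/n)$ for $k=0,\dots,n-1$, and since $\cos(2\pi k/n)=\cos(2\pi(n-k)/n)$ these collapse to the single value at $k=0$ together with $(n-1)/2$ pairs, i.e.\ exactly $(n+1)/2=\lceil n/2\rceil$ distinct values. This already gives $q(C_n)\leq\lceil n/2\rceil$, matching the lower bound.

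The main obstacle is the even case $n=2m$, where the adjacency matrix fails: it carries the two \emph{simple} eigenvalues $\pm2$ alongside $m-1$ doubled values, for $m+1$ distinct eigenvalues, one too many. (This also shows the bound $q\geq\diam+1$ cannot hold here, as it would force $q\geq m+1$; that inequality requires entrywise nonnegative matrices, which my construction will deliberately violate.) My remedy is to replace the adjacency matrix by an \emph{unbalanced signed cycle}: keep all edge weights equal to $1$ except for one edge, whose weight I set to $-1$ (any odd number of negative edges serves), and keep the diagonal zero. This matrix still lies in $S(C_{2m})$. Seeking Fourier-type eigenvectors $v_j=e^{\mathrm{i}\theta j}$ forces \emph{anti-periodic} boundary conditions $v_{j+2m}=-v_j$ across the negated edge, so $\theta=(2k+1)\pi/n$ and the eigenvalues are $2\cos\big((2k+1)\pi/n\big)$, $k=0,\dots,n-1$. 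For even $n$ no such angle equals $0$ or $\pi$ (a parity obstruction, since $2k+1$ is odd while $2m$ is even), so every eigenvalue is doubled and there are exactly $n/2=\lceil n/2\rceil$ distinct values.

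Combining the two directions yields $q(C_n)=\lceil n/2\rceil$. The one genuinely delicate point, where I would lean on the work of \cite{FF}, is the spectral computation for the unbalanced signed cycle: verifying that negating a single edge shifts the spectrum from $\{2\cos(2\pi k/n)\}$ to $\{2\cos((2k+1)\pi/n)\}$, thereby merging the two offending simple eigenvalues into the doubled family. The odd case and the lower bound are then routine. If one prefers a normalized spectrum, Lemma~\ref{lem:01} lets me place any two prescribed reals in $\sigma(A)$ without altering the count, though this is not needed for the statement as given.
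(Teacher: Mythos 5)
Your proof is correct, and for the even case it takes a genuinely different route from the paper. Both arguments get the lower bound from the fact that no eigenvalue of a matrix in $S(C_n)$ can have multiplicity exceeding $2$ --- the paper cites \cite[Cor.~3.4]{FF} for this, while you derive it from $\mr(C_n)=n-2$ (equivalently $\M(C_n)=2$); these are the same fact in different clothing, and your parenthetical alternative via a unique geodesic of length $\lfloor (n-1)/2\rfloor$ and Theorem~\ref{thm:uniqueminpath} is a nice self-contained option that avoids citing either. The real divergence is the upper bound for $n=2m$: the paper invokes the inverse eigenvalue theorem for cycles \cite[Thm.~3.3]{FF}, which guarantees a matrix in $S(C_n)$ realizing any prescribed list $\lambda_1=\lambda_2>\cdots>\lambda_{2m-1}=\lambda_{2m}$, whereas you exhibit an explicit witness: the cycle with a single edge weight negated, whose spectrum is $\{2\cos((2k+1)\pi/n)\}_{k=0}^{n-1}$ by the anti-periodic Fourier eigenvector computation, with every value doubled by the parity obstruction you note. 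Your construction is more elementary and concrete (the anti-periodic calculation is routine, so you do not actually need to ``lean on \cite{FF}'' for it, contrary to your closing remark); the paper's appeal to \cite{FF} is less self-contained but buys more, since it shows \emph{any} admissible doubled spectrum is realizable, not just one. One small stylistic point: your observation that the negated edge forces the matrix to be non-nonnegative, thereby evading the $\diam(G)+1$ bound, is a worthwhile sanity check that the paper does not make explicit.
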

\begin{proof} First, suppose $n=2k+1$, for some $k\geq 1$. Then, the adjacency
matrix of $C_n$ has exactly $\frac{n-1}{2}+1$ distinct
eigenvalues, these eigenvalues are $2\cos \frac{2\pi j}{n}$, $j=1,\ldots,
n$. On the other hand, using
\cite[Cor. 3.4]{FF}, any eigenvalue of  $A\in
S(C_n)$ has multiplicity at most two, so $q(C_n)\geq \frac{n-1}{2}+1$. Thus, $q(C_n)= \frac{n-1}{2}+1$.

Next, consider $n=2k$, for some $k\geq 2$. Again using \cite[Cor. 3.4]{FF},
$q(C_n)\geq k$. Moreover, by \cite[Thm. 3.3]{FF} for any set of
numbers $\lambda_1=\lambda_{2}> \lambda_3=\lambda_{4}>\ldots >
\lambda_{2k-1}=\lambda_{2k},$ there is an $A\in S(C_{n})$ with
eigenvalues $\lambda_i$, $i=1,\ldots, n$. This implies
that if $n$ is even, then $q(C_n)=\frac{n}{2}.$\end{proof}

Since $\mr(C_n)=n-2$ and $q(C_n)\approx n/2$, we know that for some graphs $G$
there can be a large gap between the parameters $\mr(G)$ and
$q(G)$.

\section{Unique shortest path}

There is only one family of graphs for which the eigenvalues for every
matrix in $S(G)$ are all distinct, these are paths. This
statement is Theorem 3.1 in \cite{F}, and also follows from a result
by Fiedler~\cite{Fiedler}, which states that for a real symmetric matrix $A \in M_n$ and a
diagonal matrix $D$ if $\mathrm{rank}(A + D) \geq n - 1$, then $A\in
S(P_n)$. A path on $n$ vertices is denoted by $P_n$. 

\begin{proposition}\label{paths}
  For a graph $G$, $q(G)=|V(G)|$ if and only if $G$ is a path. 
\end{proposition}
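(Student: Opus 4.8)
The plan is to prove the two implications separately, using the diameter bound for trees in the reverse direction and the quoted Fiedler rank result in the forward direction.

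First I would dispatch the easy implication: if $G$ is a path $P_n$, then $q(G)=|V(G)|$. Since $P_n$ is a tree of diameter $n-1$, the bound $q(G)\ge \diam(G)+1$ recorded in the introduction gives $q(P_n)\ge (n-1)+1=n$, and combined with the universal upper bound $q(G)\le n$ this forces $q(P_n)=n$. As an alternative that does not invoke the diameter bound, one can argue directly that every $A\in S(P_n)$ is, after a symmetric permutation placing the path in the order $1-2-\cdots-n$, an irreducible tridiagonal (Jacobi) matrix, and such matrices have simple spectrum: an eigenvector is determined up to scale by its first coordinate through the three-term recurrence, so no eigenvalue can be repeated.

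For the forward implication I would argue by contrapositive: if $G$ is not a path, I want to exhibit a matrix in $S(G)$ with a repeated eigenvalue, which shows $q(G)\le n-1<n$. Fix any $A\in S(G)$; its graph is $G$, which is not a path, so $A\notin S(P_n)$. The Fiedler result quoted above says that a symmetric matrix all of whose diagonal perturbations satisfy $\mathrm{rank}(A+D)\ge n-1$ must lie in $S(P_n)$. Taking the contrapositive, since $A\notin S(P_n)$ there is a diagonal matrix $D$ with $\mathrm{rank}(A+D)\le n-2$. Because $D$ alters only the main diagonal, $A+D$ still lies in $S(G)$; and a symmetric matrix of rank at most $n-2$ has nullity at least $2$, so $0$ is an eigenvalue of $A+D$ of multiplicity at least two. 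Hence $q(A+D)\le n-1$, giving $q(G)\le n-1$, as required.

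The main obstacle is concentrated entirely in the Fiedler result: it is what guarantees that a non-path pattern can always be made rank-deficient by a suitable choice of diagonal. If one wished to avoid citing it, one would instead have to split into cases according to why $G$ fails to be a path --- $G$ disconnected, $G$ containing a vertex of degree at least three, or $G$ containing a cycle --- and construct a repeated eigenvalue in each case (for instance using Lemma~\ref{lem:01} to align the spectra of two components when $G$ is disconnected), which essentially reproduces known facts about maximum multiplicity. Leaning on Fiedler's theorem keeps the argument short, and the only points that then need care are the routine observations that diagonal changes preserve membership in $S(G)$ and that rank at most $n-2$ yields a genuine multiplicity-two eigenvalue.
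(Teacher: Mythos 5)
Your proof is correct and follows exactly the route the paper itself indicates: the paper offers no written proof but states that the proposition is Theorem 3.1 of de Fonseca and ``also follows from'' Fiedler's rank characterization of tridiagonal matrices, which is precisely the contrapositive argument you carry out (a non-path pattern admits a diagonal shift of rank at most $n-2$, hence a multiplicity-two eigenvalue), together with the standard simplicity of the spectrum of an irreducible Jacobi matrix for the converse. In effect you have supplied the details that the paper leaves to the citations, and both halves of your argument are sound.
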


From this we can also conclude that the parameter $q$ is not monotone
on induced subgraphs; as $q(P_n)=n$ while $q(C_n)\approx
n/2.$ The next result is related to a very simple, but often very
effective, lower bound on the minimum number of distinct eigenvalues of a
graph that is based on the length of certain induced paths. Recall that the
{\em length} of a path is simply the number of edges in that path, and that the
{\em distance between two vertices}, (in the same component) is the length of the shortest path between 
those two vertices. 

\begin{theorem}\label{thm:uniqueminpath}
  If there are vertices $u$, $v$ in a connected graph $G$ at distance $d$ and the path of length $d$ from $u$ to $v$ is unique, then $q(G) \geq d+1$.
\end{theorem}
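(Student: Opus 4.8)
The plan is to prove the stronger statement that \emph{every} matrix $A \in S(G)$ has at least $d+1$ distinct eigenvalues, by comparing the $(u,v)$ entries of the powers $A^0, A^1, \ldots, A^d$ with the combinatorial structure of walks in $G$. The bridge between the two is that the number of distinct eigenvalues controls how many powers of $A$ are linearly independent, while the distance hypothesis controls which powers can have a nonzero $(u,v)$ entry.

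First I would record how entries of powers of $A$ are expressed combinatorially. For $A \in S(G)$, the entry $(A^k)_{uv}$ is a sum, over all sequences $u = w_0, w_1, \ldots, w_k = v$ in which consecutive vertices are either equal or adjacent, of the product $A_{w_0 w_1} A_{w_1 w_2} \cdots A_{w_{k-1} w_k}$. Since the diagonal of $A$ is unconstrained, a stationary step $w_j = w_{j+1}$ is allowed and contributes a (possibly nonzero) diagonal entry, but such a step makes no progress toward $v$. Deleting the stationary steps from a contributing sequence leaves an ordinary walk from $u$ to $v$ in $G$, which has at least $d$ edges because $\mathrm{dist}(u,v)=d$; hence any contributing sequence has length at least $d$. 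Consequently $(A^k)_{uv} = 0$ for all $0 \le k \le d-1$ (the case $k=0$ being immediate as $u \ne v$).

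Next I would analyze the critical power $A^d$. A contributing sequence of length exactly $d$ can contain no stationary step, since otherwise it would use fewer than $d$ edge-steps and could not reach $v$; thus it is a genuine walk of length $d$ from $u$ to $v$. Such a walk cannot repeat a vertex, for excising a closed subwalk between two occurrences would yield a strictly shorter $u$--$v$ walk, contradicting $\mathrm{dist}(u,v)=d$. Therefore every contributing sequence is a shortest $u$--$v$ path, and by hypothesis there is exactly one. Hence $(A^d)_{uv}$ equals the single product of nonzero edge weights along that path, which is nonzero. This is the step where uniqueness is indispensable: with several shortest paths the corresponding products could cancel, whereas a single term cannot vanish. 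I expect this combinatorial lemma, together with the bookkeeping for the free diagonal entries, to be the main obstacle; once it is in hand the remainder is routine.

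Finally I would convert these entrywise facts into the eigenvalue bound. Suppose $q(A) = m$. Since $A$ is real symmetric it is diagonalizable, so its minimal polynomial has degree $m$, and hence $A^d$ lies in the span of $I, A, \ldots, A^{m-1}$ whenever $m \le d$. But each of $I, A, \ldots, A^{d-1}$ has zero $(u,v)$ entry by the second paragraph, so any linear combination of them does too; this would force $(A^d)_{uv}=0$, contradicting the third paragraph. Therefore $m \ge d+1$, and as $A \in S(G)$ was arbitrary we conclude $q(G) \ge d+1$.
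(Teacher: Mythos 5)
Your proposal is correct and follows essentially the same route as the paper: show that $(A^k)_{uv}=0$ for $k<d$ while $(A^d)_{uv}$ is the nonzero product of edge weights along the unique shortest path, and conclude that the minimal polynomial of $A$ has degree at least $d+1$. You are somewhat more careful than the paper in handling the free diagonal entries (stationary steps in the walk expansion) and in explaining why uniqueness prevents cancellation, but the argument is the same.
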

\begin{proof} Assume that $u=v_1,v_2,\dots, v_d, v_{d+1}=v$ is the unique
path of length $d$ from $u$ to $v$. For any $A=[a_{ij}] \in S(G)$, all of the matrices $A, A^2,\ldots, A^{d-1}$
have zero in the position $(u,v)$, while the entry $(u,v)$ of
$A^d$ is equal to $\prod_{i=1}^{d}a_{v_{i} v_{i+1}}\neq 0$. Thus, the matrices $I, A, A^2,
\dots , A^{d}$ are linearly independent and the minimal polynomial of $A$
must have degree at least $d+1$.  \end{proof}

It is important to note that the induced path from $u$ to $v$ in the proof of Theorem~\ref{thm:uniqueminpath} is the
shortest path from $u$ to $v$ and that it is the only path of this
length. The length of such a path is a lower bound on the
diameter of the graph and if the path is not unique, then the bound only
holds for nonnegative matrices.

\begin{corollary}
For any connected graph $G$, if $A \in S(G)$ is nonnegative, then $q(A) \geq \diam(G) +1$.
\end{corollary}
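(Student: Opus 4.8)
The plan is to mirror the argument of Theorem~\ref{thm:uniqueminpath}, but to replace the uniqueness hypothesis on the shortest path by the nonnegativity of $A$. First I would choose vertices $u,v$ that realize the diameter, so that their distance equals $d=\diam(G)$, and fix a nonnegative matrix $A=[a_{ij}]\in S(G)$. I would then track the $(u,v)$ entries of the powers $I=A^0, A, A^2,\dots,A^d$, using the standard combinatorial interpretation that the $(u,v)$ entry of $A^k$ is the sum, over all walks of length $k$ from $u$ to $v$, of the products of the traversed entries.

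For $k<d$ there is no walk of length $k$ from $u$ to $v$, since such a walk would connect vertices at distance $d$ in fewer than $d$ steps; hence the $(u,v)$ entry of each $A^k$ with $k<d$ vanishes, and in particular $I$ has a zero there because $u\ne v$. For $k=d$ the surviving terms correspond exactly to genuine shortest paths: any contributing walk that pauses at a vertex (picking up a diagonal factor) or otherwise revisits a vertex would use fewer than $d$ edge-traversals and could not reach $v$ from $u$. This is where nonnegativity does the work that uniqueness did before. Each such term is a product of entries $a_{v_iv_{i+1}}$ that are strictly positive, because $A$ is nonnegative and every $\{v_i,v_{i+1}\}$ is an edge; so there is no cancellation, and since at least one shortest path of length $d$ exists, the $(u,v)$ entry of $A^d$ is strictly positive.

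It then follows that $A^d$ cannot be a linear combination of $I,A,\dots,A^{d-1}$, so these $d+1$ matrices are linearly independent and the minimal polynomial of $A$ has degree at least $d+1$. Because $A$ is real symmetric and hence diagonalizable, the degree of its minimal polynomial equals the number of distinct eigenvalues, i.e.\ $q(A)$, giving $q(A)\ge d+1=\diam(G)+1$. The only genuine obstacle is verifying that the $(u,v)$ entry of $A^d$ is nonzero: in Theorem~\ref{thm:uniqueminpath} this was immediate from there being a single contributing path, whereas here I must argue carefully that every contributing walk of length $d$ is in fact a shortest path and that nonnegativity precludes cancellation among the possibly several such positive terms.
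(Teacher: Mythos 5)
Your argument is correct and is exactly the argument the paper intends: the corollary is stated without a separate proof because it follows by rerunning the proof of Theorem~\ref{thm:uniqueminpath} with uniqueness of the geodesic replaced by nonnegativity of $A$ (so the several positive contributions to the $(u,v)$ entry of $A^d$ cannot cancel), which is precisely what you do. Your added care in noting that a length-$d$ term cannot involve diagonal ``pauses'' or repeated vertices, and that diagonalizability of the symmetric matrix $A$ converts the degree of the minimal polynomial into the count of distinct eigenvalues, fills in the details the paper leaves implicit.
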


We note that 
Theorem~\ref{thm:uniqueminpath} implies Theorem 3.1 from \cite{F}. 

\begin{theorem}\label{carlos} (\cite[Thm. 3.1]{F})
Suppose $G$ is a connected graph.
  If $P$ is the longest induced path in $G$ for which no edge of $P$ lies on a cycle, then $q(G) \geq |V(P)|$.
\end{theorem}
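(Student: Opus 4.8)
The plan is to reduce the statement to Theorem~\ref{thm:uniqueminpath}. Write $P = v_1 v_2 \cdots v_k$ with $k = |V(P)|$, and let $x = v_1$ and $y = v_k$ be its endpoints. I would show that $P$ is the \emph{unique} shortest path from $x$ to $y$ in $G$, so that $x$ and $y$ are at distance $d = k-1$ and this distance is realized by a single path. Theorem~\ref{thm:uniqueminpath} then yields $q(G) \geq d + 1 = k = |V(P)|$, as required. The first observation is a reinterpretation of the hypothesis: an edge of $G$ lies on a cycle precisely when it is not a cut edge (bridge). Thus the assumption that no edge of $P$ lies on a cycle says exactly that every edge $v_i v_{i+1}$ of $P$ is a bridge of $G$.

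Next, for a fixed index $i$ I would examine the graph $G - v_i v_{i+1}$. Since $v_i v_{i+1}$ is a bridge, this graph has exactly two components, one containing $v_i$ and one containing $v_{i+1}$; call them $A$ and $B$. Because the remaining edges of $P$ survive the deletion, the initial segment $v_1, \ldots, v_i$ remains connected to $v_i$ and hence lies in $A$, while the final segment $v_{i+1}, \ldots, v_k$ lies in $B$. In particular $x \in A$ and $y \in B$ fall into distinct components, so every path from $x$ to $y$ in $G$ must traverse the edge $v_i v_{i+1}$.

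Applying this for every $i = 1, \ldots, k-1$ shows that each of the $k-1$ edges of $P$ appears in every path from $x$ to $y$; consequently every such path has length at least $k-1$, while $P$ itself has length exactly $k-1$, so the distance is $d = k-1$. Moreover, any path from $x$ to $y$ of length exactly $k-1$ has only $k-1$ edges, yet it must contain all $k-1$ bridge edges of $P$, forcing it to coincide with $P$. Hence $P$ is the unique shortest path between its endpoints, and the conclusion follows from Theorem~\ref{thm:uniqueminpath}.

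The main obstacle is the separation step: one must carefully verify that deleting a single bridge of $P$ splits the path into its initial and final segments and places the two endpoints in different components, which is what guarantees that every path from $x$ to $y$ uses all the bridges. Once this is established, both the distance equality and the uniqueness of the shortest path — the precise hypotheses needed to invoke Theorem~\ref{thm:uniqueminpath} — follow at once. I note in passing that the induced-path assumption is actually implied by the bridge condition, since a chord $v_i v_j$ would put $v_i v_{i+1}$ on a cycle; retaining both hypotheses as stated does no harm.
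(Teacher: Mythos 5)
Your proposal is correct and follows exactly the route the paper intends: the paper simply remarks that Theorem~\ref{thm:uniqueminpath} implies this result, and your argument supplies the missing details by showing that the bridge condition forces $P$ to be the unique shortest path between its endpoints. The bridge-deletion separation step and the observation that the induced hypothesis is redundant are both sound.
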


It is not true that $\diam(G) +1$ is a lower bound
for the minimum number of distinct eigenvalues of an arbitrary graph $G$, see
Corollary~\ref{ex:hypercube} for a counter-example. However, in
the case of trees, since in this case any shortest path between two
vertices is the unique shortest path, we have

\begin{corollary}\label{tree-diam}
For any tree $T$,  $q(T) \geq \diam(T) +1$.
\end{corollary}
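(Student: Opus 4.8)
The plan is to derive this as an immediate consequence of Theorem~\ref{thm:uniqueminpath}. The essential observation is that in a tree, there are no cycles, so between any two vertices there is a \emph{unique} path, and in particular the unique path between any two vertices is automatically the shortest path between them. First I would pick two vertices $u$ and $v$ in the tree $T$ that realize the diameter, that is, vertices for which the distance $d(u,v)$ equals $\diam(T)$. By the definition of diameter, such a pair exists, and $d(u,v) = \diam(T)$.

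Next I would verify that the hypotheses of Theorem~\ref{thm:uniqueminpath} are satisfied for this pair. Since $T$ is a tree, it is connected and acyclic, so there is exactly one path between $u$ and $v$; this path therefore has length equal to $d(u,v) = \diam(T)$, and since it is the only path between $u$ and $v$, it is in particular the unique path of that length. Thus Theorem~\ref{thm:uniqueminpath} applies with $d = \diam(T)$, yielding $q(T) \geq \diam(T) + 1$.

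There is essentially no obstacle here: the entire content of the corollary is packaged inside Theorem~\ref{thm:uniqueminpath}, and the only thing to check is that the acyclic structure of a tree guarantees uniqueness of the diametral path. The one point worth stating carefully is that a tree may be disconnected (a forest), in which case $\diam(T)$ is typically taken over a single component, or $T$ is assumed connected as part of the definition of a tree; since Theorem~\ref{thm:uniqueminpath} is stated for connected graphs, I would either assume $T$ is connected or restrict attention to the component containing the diametral pair. With that understood, the proof is a one-line invocation of the previous theorem.
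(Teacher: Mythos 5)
Your proof is correct and follows exactly the paper's route: the paper derives this corollary directly from Theorem~\ref{thm:uniqueminpath} by observing that in a tree any shortest path between two vertices is the unique path between them, hence the unique shortest path. Your additional remark about applying the theorem to a pair of vertices realizing the diameter, and the side note on connectedness, are consistent with the paper's (one-line) justification.
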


There are several other proofs of Corollary \ref{tree-diam}, see \cite{LJ, KS1}.
There are also trees with $q(T)> \diam(T)+1$; see \cite{BF}.  Further, for
any positive integer $d$, there exists a constant $f(d)$ such that for
any tree $T$ with diameter $d$, there is a matrix $A\in S(T)$ with at
most $f(d)$ distinct eigenvalues (this was shown by B. Shader~\cite{BS}
who described $f(d)$ as possibly ``super-super-exponential'').  It has been
shown that $f(d)\geq (9/8)d$ for $d$ large, see \cite{KS1} and \cite{RUofWy}.

Using unique shortest paths, it is possible to construct a connected graph
on $n$ vertices with $q(G) = k$, for any pair of integers $k,n$ with $1\leq k \leq n$.

\begin{corollary}\label{cor:anynk}
For any pair of integers $k,n$ with $1\leq k \leq n$, let $G(n,k)$ be the graph on vertices $v_1,\ldots,v_n$, where vertices $v_1,\ldots,v_{n-k+2}$ form a clique and vertices $v_{n-k+2},v_{n-k+3},\ldots,v_{n-1},v_{n}$ form a path of length $k-2$. Then, $q(G(n,k)) = k$.
\end{corollary}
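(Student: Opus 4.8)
The plan is to prove $q(G(n,k)) = k$ by establishing the matching bounds $q(G(n,k)) \geq k$ and $q(G(n,k)) \leq k$, invoking Theorem~\ref{thm:uniqueminpath} for the former and Corollary~\ref{cor:cliquecovering} for the latter. It helps first to record the structure: $G(n,k)$ is the clique on $\{v_1,\dots,v_{n-k+2}\}$ together with a pendant path $v_{n-k+2},v_{n-k+3},\dots,v_n$ consisting of $k-2$ edges, attached to the clique at the single vertex $v_{n-k+2}$. The cases $k=1$ and $k=2$ are already settled by earlier results: for $k=1$ the graph is empty and $q=1$ by Lemma~\ref{lem:q=1}, while for $k=2$ we have $n-k+2=n$, so $G(n,2)=K_n$ and $q=2$. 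I would therefore assume $k \geq 3$, in which case the clique has at least two vertices and the pendant path has at least one edge.

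For the lower bound I would take $u=v_n$ and $v=v_1$ and argue that the shortest $u$--$v$ path has length $k-1$ and is unique. Every path leaving $v_n$ must begin with the edge $v_nv_{n-1}$, and since each internal pendant vertex $v_{n-k+3},\dots,v_{n-1}$ has degree two, the path is forced all the way to $v_{n-k+2}$; moreover $v_{n-k+2}$ is the only vertex through which the pendant path meets the clique, so every $v_n$--$v_1$ path must pass through it. Hence the distance from $v_n$ to $v_{n-k+2}$ equals $k-2$ and is realized by a single path, and appending the edge $v_{n-k+2}v_1$ produces a unique shortest path of length $k-1$ from $v_n$ to $v_1$. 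Theorem~\ref{thm:uniqueminpath} then gives $q(G(n,k)) \geq (k-1)+1 = k$.

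For the upper bound I would exhibit an explicit clique cover of size $k-1$: the single clique $\{v_1,\dots,v_{n-k+2}\}$ covers all edges inside the clique, and the $k-2$ edges of the pendant path are covered by themselves as $k-2$ copies of $K_2$. This yields $\cc(G(n,k)) \leq (k-2)+1 = k-1$, so Corollary~\ref{cor:cliquecovering} gives $q(G(n,k)) \leq \cc(G(n,k))+1 \leq k$. Combining the two inequalities gives $q(G(n,k))=k$. As a by-product, since $q \leq \cc+1$ always holds, the equality $q=k$ forces $\cc(G(n,k)) \geq k-1$, which together with the explicit cover shows $\cc(G(n,k))=k-1$ and that the bound of Corollary~\ref{cor:cliquecovering} is attained with equality, as promised.

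I expect the only delicate point to be the uniqueness claim underlying the lower bound: one must verify not merely that the path running along the pendant path and then through the edge $v_{n-k+2}v_1$ is shortest, but that no distinct path of the same length exists. This rests on the degree-two structure of the internal pendant vertices and on $v_{n-k+2}$ being the unique vertex separating $v_n$ from the rest of the clique, and it is the step that genuinely uses the specific construction of $G(n,k)$ rather than a generic clique-with-a-tail. The upper bound, by contrast, is immediate once the clique cover is written down.
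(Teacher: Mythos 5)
Your proposal is correct and follows exactly the paper's own argument: the lower bound comes from the unique shortest path of length $k-1$ from $v_n$ into the clique via Theorem~\ref{thm:uniqueminpath}, and the upper bound from the clique cover by the large clique plus the $k-2$ path edges via Corollary~\ref{cor:cliquecovering}. The extra care you take with the uniqueness of the shortest path and the small cases $k=1,2$ is detail the paper leaves implicit, but the route is the same.
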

\begin{proof} There is a unique shortest path of length $k-1$ from $v_{n}$ to any of the vertices $v_1,\ldots,v_{n-k+1}$ and there is a clique covering of the graph consisting of $k-1$ cliques. Thus, by Corollary~\ref{cor:cliquecovering} and Theorem~\ref{thm:uniqueminpath}, $q(G(n,k))=k$.  \end{proof}

\section{Graphs with two distinct eigenvalues}
\label{sec:q=2}

For a graph $G$, $q(G)=2$ means that there is a matrix
$A\in S(G)$ such that $A$ has exactly two distinct eigenvalues, and
there is no matrix in $S(G)$ with only one eigenvalue. Therefore, the
minimal polynomial of $A$ has degree two, thus, $A$ satisfies
$A^2=\alpha A + \beta I$, for some scalars $\alpha$ and
$\beta$. This implies that $A$ and
$A^2$ have exactly the same zero-nonzero pattern on the off-diagonal
entries. Equivalently, for any nonempty graph $G$, $q(G)=2$ if and only if $S(G)$ contains a real symmetric orthogonal matrix $Q$. Using this, we can show the following results with the aid of Theorem \ref{thm:uniqueminpath}. 

\begin{lemma}\label{nopendant}
  If $q(G)=2$, for a connected graph $G$ on $n\geq 3$ vertices, then
  $G$ has no pendant vertex.
\end{lemma}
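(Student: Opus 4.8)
The plan is to argue by contradiction and to invoke Theorem~\ref{thm:uniqueminpath} to manufacture a unique shortest path of length~$2$, which would force $q(G) \geq 3$ and thereby contradict the hypothesis $q(G)=2$. So suppose, toward a contradiction, that $G$ has a pendant vertex $v$, whose unique neighbor I call $w$. The whole strategy reduces to exhibiting a third vertex $u$ such that $v$ and $u$ are joined by a \emph{unique} path of length equal to their distance, and showing that this distance is $2$.

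First I would observe that $w$ cannot itself be pendant: if $w$ had $v$ as its only neighbor, then $\{v,w\}$ would form an isolated edge, i.e.\ a connected component on two vertices, which is impossible since $G$ is connected with $n \geq 3$ vertices. Hence $w$ has some neighbor $u \notin \{v,w\}$. Next I would check the two properties needed to apply the theorem. Since $v$'s only neighbor is $w$ and $u \neq w$, the vertex $v$ is not adjacent to $u$, so their distance is at least $2$; and the walk $v\,\text{--}\,w\,\text{--}\,u$ is a path of length $2$, so in fact $d(v,u)=2$. For uniqueness, any shortest path has the form $v\,\text{--}\,x\,\text{--}\,u$ with $x$ adjacent to $v$, and the only such $x$ is $w$; thus $v\,\text{--}\,w\,\text{--}\,u$ is the unique shortest path. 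Applying Theorem~\ref{thm:uniqueminpath} with $d=2$ then yields $q(G) \geq 3$, contradicting $q(G)=2$.

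The only delicate point is the existence of the vertex $u$, i.e.\ ruling out that $w$ is also pendant; this is precisely where both hypotheses ($n \geq 3$ and connectivity) are consumed, and once $u$ is in hand the remaining distance and uniqueness verifications are immediate. I expect no genuine obstacle beyond keeping this case-check honest. As a sanity alternative, one could instead argue algebraically from the characterization in the section preamble: taking $A \in S(G)$ with $A^2 = \alpha A + \beta I$ and examining the row of $A$ indexed by the pendant vertex $v$ forces, via the off-diagonal entries of $A^2$, that $w$ has no neighbor other than $v$, again isolating the edge $vw$ and contradicting connectivity for $n\geq 3$. Either route works, but the unique-shortest-path argument is the cleaner one and matches the stated intent of using Theorem~\ref{thm:uniqueminpath}.
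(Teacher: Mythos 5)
Your proof is correct and follows essentially the same route as the paper: locate a second neighbor $u$ of the support vertex $w$ (which exists by connectivity and $n\geq 3$), observe that $v$--$w$--$u$ is the unique shortest path of length $2$, and apply Theorem~\ref{thm:uniqueminpath} to force $q(G)\geq 3$. You simply make explicit the small verifications (existence of $u$, uniqueness of the path) that the paper leaves implicit.
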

\begin{proof} Suppose vertex $v_1$ is pendant and suppose its unique neighbor is $v_2$. Since $G$ is connected and has at least $3$
vertices there is another vertex $v_3$ that is adjacent to $v_2$. Thus
there is a unique shortest path from $v_1$ to $v_3$ of length $2$ and
the result follows from Theorem~\ref{thm:uniqueminpath}.\end{proof}

The previous basic result is contained in the next slight generalization by noting that any edge 
incident with a pendant vertex is a {\em cut edge} (that is, its deletion results in a 
disconnected graph).

\begin{lemma}\label{nocutedge}
  Suppose $G$ is a connected graph on $n$ vertices with $n\geq 3$. If
  $q(G)=2$, then there is no cut edge in the graph $G$.
\end{lemma}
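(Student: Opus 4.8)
The plan is to prove Lemma~\ref{nocutedge} by contradiction using the characterization of $q(G)=2$ established at the start of Section~\ref{sec:q=2}: namely, that $q(G)=2$ forces the existence of a real symmetric orthogonal matrix $Q\in S(G)$, equivalently a matrix satisfying $Q^2=\alpha Q+\beta I$. Suppose $e=\{a,b\}$ is a cut edge whose removal splits $G$ into components; let $U$ be the vertex set of the component containing $a$ (with $a\in U$) and $W$ the vertex set of the component containing $b$. The key structural fact I want to exploit is that, because $e$ is the \emph{only} edge joining $U$ and $W$, the vertex $a$ and the vertex $b$ are the unique gateway between the two sides, and any shortest path from a vertex of $U$ to a vertex of $W$ must pass through the edge $e$.

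The main idea is to locate two vertices with a unique shortest path of length at least $2$ and invoke Theorem~\ref{thm:uniqueminpath}, which gives $q(G)\geq 3$, contradicting $q(G)=2$. First I would dispose of the case handled by Lemma~\ref{nopendant}: if either endpoint of $e$ is pendant we are already done. So assume both $a$ and $b$ have degree at least $2$. Then $a$ has a neighbor $a'\in U$ with $a'\neq b$, and $b$ has a neighbor $b'\in W$ with $b'\neq a$. I claim the shortest path from $a'$ to $b'$ is unique and has length exactly $3$: any $a'$-$b'$ path must cross the cut edge $e$, hence must contain both $a$ and $b$, so it has the form $a',\dots,a,b,\dots,b'$; since $a'$ is adjacent to $a$ and $b'$ is adjacent to $b$, the path $a',a,b,b'$ has length $3$, and I must argue no shorter or alternative path of that length exists.

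The delicate point — and I expect this to be the main obstacle — is verifying \emph{uniqueness} of the shortest path rather than merely exhibiting a short path. One must rule out the possibility that $a'$ itself equals $b$ or lies on the other side (impossible since $a'\in U$), and more importantly that there is a second path of the same minimal length. The cleanest route is to pick $a'$ and $b'$ carefully: since $a$ and $b$ each have degree $\geq 2$, choose $a'\in U\setminus\{b\}$ adjacent to $a$ and $b'\in W\setminus\{a\}$ adjacent to $b$. Because every $a'$-$b'$ walk must use the bridge $e$, the distance is $\operatorname{dist}(a',a)+1+\operatorname{dist}(b,b')$, and with $a',b'$ chosen as neighbors this equals $1+1+1=3$. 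Uniqueness of the length-$3$ path then reduces to the fact that within each component the relevant segment ($a'$ to $a$, and $b$ to $b'$) is a single edge, so the only length-$3$ path is $a',a,b,b'$; any other $a'$-$b'$ path must still traverse $e$ and therefore has length at least $3$, with equality forcing it to be exactly this path.

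Once the unique shortest path of length $3$ is established, Theorem~\ref{thm:uniqueminpath} immediately yields $q(G)\geq 4>2$, the desired contradiction, so no cut edge can exist when $q(G)=2$. I would close by remarking that the argument subsumes Lemma~\ref{nopendant} as noted in the paper, since the pendant case is exactly the degenerate subcase where one side of the cut is a single vertex; the only extra care needed beyond that lemma is ensuring both endpoints of the bridge have a second neighbor, which is guaranteed once the pendant case is handled separately.
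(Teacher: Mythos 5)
Your proof is correct and uses essentially the same idea as the paper: locate a unique shortest path across the cut edge and apply Theorem~\ref{thm:uniqueminpath}. The paper's version is slightly leaner---it takes the length-$2$ path from one endpoint $v_1$ of the bridge to a second neighbor $v_3$ of the other endpoint $v_2$ (such a $v_3$ exists by connectivity and $n\geq 3$), which already yields $q(G)\geq 3$ without needing the separate pendant-vertex case or the length-$3$ path.
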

\begin{proof} Assume that vertices $v_1$ and $v_2$ form a cut
edge. We can assume without loss of generality that there is another
vertex $v_3$ in $G$ that is adjacent to $v_2$. Thus there is a unique
shortest path from $v_1$ to $v_3$ of length $2$ and the result follows
from Theorem~\ref{thm:uniqueminpath}.  \end{proof}

The next result should be compared to Theorem \ref{carlos}.

\begin{corollary}
  If $G$ is a graph on $n\geq 3$ vertices with $q(G)=2$, then every
  edge in $G$ is contained in a cycle.
\end{corollary}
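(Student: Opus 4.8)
The plan is to obtain this as an immediate consequence of Lemma~\ref{nocutedge}, once I invoke the classical fact that an edge of a graph lies on a cycle if and only if it is not a cut edge. Granting that equivalence, the argument is essentially a single line: since $q(G)=2$ and $n\geq 3$, Lemma~\ref{nocutedge} tells me that $G$ has no cut edge, so no edge fails to lie on a cycle, which is exactly the claim. This is also the reason the remark preceding the statement points back to Theorem~\ref{carlos}: the corollary is the ``positive'' reformulation of the cut-edge obstruction.

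The one point that needs to be pinned down is the bridge--cycle equivalence itself, which I would verify directly. If an edge $e=\{u,v\}$ lies on a cycle, then deleting $e$ still leaves a $u$--$v$ path along the remainder of that cycle, so $u$ and $v$ stay connected and $e$ is not a cut edge; conversely, if $e$ is not a cut edge, then $u$ and $v$ remain connected in $G-e$, and any $u$--$v$ path in $G-e$ together with $e$ closes up into a cycle through $e$. Alternatively, I could sidestep Lemma~\ref{nocutedge} entirely and re-derive the corollary straight from Theorem~\ref{thm:uniqueminpath}, in the same spirit as the cut-edge proof: were some edge $\{v_1,v_2\}$ not on a cycle, it would be a cut edge, and using connectedness together with $n\geq 3$ I could locate a vertex $v_3$ adjacent to $v_2$, yielding a unique shortest path of length $2$ from $v_1$ to $v_3$ and hence $q(G)\geq 3$, contradicting $q(G)=2$.

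I expect the genuine obstacle to lie not in the connected case but in the bookkeeping when $G$ is disconnected, since Lemma~\ref{nocutedge} is stated only for connected graphs. Here I would apply the lemma to each connected component in turn, first noting that a symmetric orthogonal matrix in $S(G)$ restricts to a symmetric orthogonal block on each component, so each component again realizes $q=2$. The care is needed because a component isomorphic to $K_2$ would itself contribute a cut edge lying on no cycle; the statement is therefore most naturally read with every nontrivial component having at least three vertices (in particular, for connected $G$). Once that component-wise reduction is made explicit, the remaining verification is routine and the conclusion follows as above.
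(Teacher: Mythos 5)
Your proof is correct and takes essentially the same route as the paper, whose entire proof is to cite Lemmas~\ref{nopendant} and~\ref{nocutedge}; the bridge--cycle equivalence you spell out is the implicit glue in both arguments, and in fact Lemma~\ref{nocutedge} alone suffices since a pendant edge is a cut edge. Your side remark that the statement tacitly assumes connectedness is well taken (e.g.\ $K_2\cup K_1$ on three vertices has $q=2$ yet its edge lies on no cycle), a point the paper's one-line proof does not address.
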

\begin{proof} Lemmas \ref{nopendant} and \ref{nocutedge} together imply this
result. \end{proof}

Consider $\alpha\subseteq\{1,2,\ldots,m\}$ and $\beta\subseteq\{1,2,\ldots,n\}$. For a matrix $A\in M_{m,n}$, $A[\alpha,\beta]$ denotes the submatrix of $A$ lying in rows indexed by $\alpha$ and columns indexed by $\beta$. Recall that for any vertex $v$ of a graph $G$, the {\em neighborhood
set of $v$}, denoted by $N(v)$, is the set of all vertices in $G$ adjacent to $v$.

\begin{theorem}\label{neighbors}
For a connected graph $G$ on $n$ vertices, if $q(G)=2$, then for any independent set of vertices $\{v_1,v_2,\dots, v_k\}$, we have
\[
\left| \bigcup_{i \neq j} ( N(v_i) \cap N(v_j) ) \right | \geq k.
\]
\end{theorem}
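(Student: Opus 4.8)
The plan is to run everything off the reformulation recorded just before the statement: since $G$ is non-empty and $q(G)=2$, the class $S(G)$ contains a real symmetric orthogonal matrix $Q$, so that $Q=Q^{T}$ and $Q^{2}=I$. (Such a $Q$ is obtained from any $A\in S(G)$ with two eigenvalues by shifting and scaling $A$ so its eigenvalues become $\pm 1$, which leaves the off-diagonal pattern, hence membership in $S(G)$, unchanged.) Fix an independent set $\{v_1,\dots,v_k\}$ and write $U=\bigcup_{i\ne j}\bigl(N(v_i)\cap N(v_j)\bigr)$. I will extract the inequality from the single identity $Q^{2}=I$, read in the entries indexed by pairs taken from the independent set.

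First I would compute $(Q^{2})_{v_iv_j}$ for $i\ne j$. Since the set is independent, $Q_{v_iv_j}=0$, and no member $v_\ell$ of the set is adjacent to another; running through $(Q^{2})_{v_iv_j}=\sum_w Q_{v_iw}Q_{v_jw}$, the only terms that can survive are those with $w\in N(v_i)\cap N(v_j)$ (the diagonal and the other set-members all contribute a factor $0$), so $\sum_{w\in N(v_i)\cap N(v_j)}Q_{v_iw}Q_{v_jw}=(Q^{2})_{v_iv_j}=0$. As a check, this already reproduces via Theorem~\ref{thm:uniqueminpath} the fact that two members at distance $2$ must have at least two common neighbors, since a single common neighbor would make the sum one nonzero product. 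Now form the $k\times|U|$ submatrix $B=Q[\{v_1,\dots,v_k\},U]$. Two rows of $Q$ indexed by the set share support only on their common neighbors, and every common neighbor lies in $U$; hence the inner product of rows $i$ and $j$ of $B$ equals the full sum above, namely $0$, so the rows of $B$ are pairwise orthogonal.

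Finally, pairwise orthogonal vectors are linearly independent provided none is the zero vector, and then $k=\mathrm{rank}(B)\le|U|$, which is the claim. The main obstacle is precisely this nonvanishing: I must exclude a member $v_i$ all of whose neighbors are \emph{private}, i.e.\ shared with no other $v_j$, because such a $v_i$ gives a zero row of $B$ and the rank could fall below $k$. To overcome this I would try to use the global orthogonality of $Q$ (all $n$ rows are orthonormal and $Q$ is invertible) together with the connectivity of $G$ and the absence of cut edges from Lemma~\ref{nocutedge}, aiming to force each $v_i$ to meet $U$. I expect this last step to be the crux of the argument and the point at which the hypotheses that $G$ is connected and that $q(G)=2$ (not merely that $Q$ has orthonormal rows supported on the neighborhoods) must genuinely be invoked.
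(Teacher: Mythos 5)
Your argument is, up to taking the contrapositive, the same as the paper's: pass to a symmetric orthogonal $Q\in S(G)$, restrict the rows indexed by the independent set to the columns indexed by $U=\bigcup_{i\neq j}\bigl(N(v_i)\cap N(v_j)\bigr)$, observe that the inner product of two distinct such rows of $Q$ is carried entirely by the columns in $U$, and try to conclude $k\le |U|$ from the pairwise orthogonality of the $k$ rows of the resulting $k\times|U|$ submatrix. The step you flag as the crux --- that none of these $k$ rows is the zero vector, i.e.\ that no $v_i$ has only private neighbours --- is indeed the only missing piece, and you are right to distrust it. The paper disposes of it with the unjustified parenthetical ``as they are all nonzero,'' and in fact it cannot be justified: the statement as written is false. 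By the paper's own Corollary~\ref{ex:hypercube}, $q(Q_3)=2$, yet the two antipodal vertices $u,v$ of the $3$-cube form an independent set of size $k=2$ with $N(u)\cap N(v)=\emptyset$ (a common neighbour would have to be at Hamming distance $1$ from both $u$ and $v$, which are at distance $3$), so the left-hand side is $0<2$. The same example refutes Corollary~\ref{cor:22adj}. So your plan of using connectivity and the invertibility of $Q$ to force each $v_i$ to meet $U$ cannot succeed.

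What your computation (and the paper's) actually establishes is the weaker, correct inequality $|U|\ge m$, where $m$ is the number of vertices $v_i$ in the independent set that share a neighbour with at least one other $v_j$: the remaining $k-m$ vertices contribute zero rows to the submatrix, and zero rows are orthogonal to everything, so they impose no constraint. To recover the stated conclusion one must either add a hypothesis guaranteeing $m=k$ (for instance, that every $v_i$ has a common neighbour with some other member of the set), or weaken the conclusion to $|U|\ge m$. In short, your proof attempt is incomplete at exactly the point where the paper's own proof is wrong, and the gap is not a technicality but a genuine counterexample to the theorem.
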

\begin{proof} For the purpose of a contradiction, suppose $G$ is a
graph with $q(G)=2$ and that there exists an independent set $S$ with $|S|=k$ such that
\[ 
\left|  \bigcup_{i\neq j} (N(v_i) \cap N(v_j)) \right| < k, 
\]
and let $X = \bigcup_{i\neq j} (N(v_i) \cap N(v_j))$. Using Lemma~\ref{lem:01}, there exists
a symmetric orthogonal matrix $A\in S(G)$. Consider such $A$ and let $B=A[S,\{1,\ldots,n\}]$. Observe that $B$ is a $k \times n$ matrix and any
column of $B$ not indexed with $X$ contains at most one nonzero entry. Since the rows of $B$ are orthogonal, we deduce that rows
of $C=A[S,X]$ must also be orthogonal. However, $C$ is a $k \times |X|$
matrix with $k>|X|$, and orthogonality of these rows is impossible, as
they are all nonzero. This completes the proof. \end{proof}

The next two statements are immediate, yet interesting, consequences of Theorem \ref{neighbors}.
\begin{corollary}\label{cor:22adj}
  Let $G$ be a connected graph on $n\geq 3$ vertices with $q(G)=2$. Then, any two non-adjacent vertices
  must have at least two common neighbors.
\end{corollary}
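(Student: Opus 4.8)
The plan is to apply Theorem~\ref{neighbors} directly to the two-element set formed by the non-adjacent pair. First I would take two non-adjacent vertices $u$ and $v$ in $G$. Since there is no edge joining them, the set $\{u,v\}$ is an independent set, and here it has size $k=2$. The standing hypotheses of Theorem~\ref{neighbors} are already in force, because by assumption $G$ is connected with $q(G)=2$, so the theorem applies to this independent set without any additional verification.

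Next I would specialize the conclusion of Theorem~\ref{neighbors} to $\{u,v\}$. With only two vertices present, the pairs of distinct indices $i\neq j$ reduce to the single unordered pair $\{u,v\}$, so the union $\bigcup_{i\neq j}(N(v_i)\cap N(v_j))$ collapses to the single set $N(u)\cap N(v)$. The inequality furnished by Theorem~\ref{neighbors} then reads
\[
|N(u)\cap N(v)|\geq 2,
\]
which is precisely the statement that $u$ and $v$ have at least two common neighbors.

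There is no genuine obstacle in this argument: the corollary is an immediate specialization of the more general neighborhood inequality. The only points worth recording are the trivial reduction of the union to a single intersection in the case $k=2$ and the observation that a non-adjacent pair is automatically an independent set, so that the hypothesis on independent sets is met.
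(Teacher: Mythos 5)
Your proof is correct and is exactly the argument the paper intends: the corollary is stated as an immediate consequence of Theorem~\ref{neighbors}, obtained by taking the independent set $\{u,v\}$ with $k=2$ so that the union of pairwise neighborhood intersections reduces to $N(u)\cap N(v)$.
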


\begin{corollary}
  Suppose $q(G)=2$, for a connected graph $G$ on $n\geq 3$ vertices. If
  the vertex $v_1$ has degree exactly two with adjacent vertices $v_2$ and $v_3$, then
every vertex $v$ that is different from $v_2$ and $v_3$,  has exactly the same neighbors as $v_1$.
\end{corollary}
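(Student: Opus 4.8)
The plan is to use the characterization already recorded in this section that, for a nonempty graph, $q(G)=2$ if and only if $S(G)$ contains a real symmetric orthogonal matrix $Q$; such a $Q$ satisfies $Q^2=I$, so its rows form an orthonormal basis and, for distinct vertices $x,y$, the inner product of rows $x$ and $y$ vanishes. The first step is to establish the containment $N(v)\supseteq\{v_2,v_3\}$ for every $v\neq v_2,v_3$. Since $v_1$ has degree exactly two, any such $v$ (other than $v_1$ itself) is non-adjacent to $v_1$, so by Corollary~\ref{cor:22adj} the vertices $v$ and $v_1$ have at least two common neighbours; as $N(v_1)=\{v_2,v_3\}$, these common neighbours can only be $v_2$ and $v_3$, whence $v$ is adjacent to both. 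For $v=v_1$ the claim is trivial.

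The second step, which is the real content of the statement, is to show that $v$ has \emph{no} neighbour outside $\{v_2,v_3\}$, i.e.\ that $N(v)=\{v_2,v_3\}$ exactly. Here I would pass to the orthogonal matrix $Q$. The row of $Q$ indexed by $v_1$ is supported only on the positions $v_1,v_2,v_3$, with nonzero entries $q_{v_1v_2},q_{v_1v_3}$. Writing the orthogonality of row $v$ against row $v_1$, and using $q_{vv_1}=0$ together with $q_{v_1w}=0$ for $w\notin\{v_1,v_2,v_3\}$, yields the single linear relation $q_{vv_2}q_{v_1v_2}+q_{vv_3}q_{v_1v_3}=0$. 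I would then try to feed these relations, ranging over all vertices playing the role of $v$, into the remaining orthonormality constraints contained in $Q^2=I$, and into Theorem~\ref{neighbors} applied to independent sets formed from $v_1$ together with pairs of putative additional neighbours, in order to force $q_{vw}=0$ for every other vertex $w$.

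The hard part is exactly this second step, and it is the place where I would expect the main difficulty. The combinatorial input from Corollary~\ref{cor:22adj} delivers only the containment, while the one orthogonality relation between rows $v$ and $v_1$ merely pins the ratio $q_{vv_2}:q_{vv_3}$ and leaves every entry $q_{vw}$ with $w\notin\{v_1,v_2,v_3\}$ unconstrained. Closing the gap therefore seems to require the global constraint $Q^2=I$ across all rows at once, and it is not clear that these relations actually forbid $v$ from being joined to another vertex that is itself adjacent to $v_2$ and $v_3$: any independent set assembled from such vertices already has $\{v_2,v_3\}$ as common neighbours, so Theorem~\ref{neighbors} gives no obstruction beyond an independent set of size two. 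I would accordingly concentrate the effort on this no-extra-edge step and treat it as the crux, since it is where the argument must either exploit the full strength of $Q^2=I$ or else rely on an additional structural hypothesis.
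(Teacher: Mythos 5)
Your first step is, in substance, the paper's entire proof: the corollary is presented there with no argument beyond the sentence that it is an ``immediate consequence'' of Theorem~\ref{neighbors}, and the only immediate consequence available is exactly what you extract from Corollary~\ref{cor:22adj} --- any $v\notin\{v_1,v_2,v_3\}$ is non-adjacent to $v_1$, so it shares at least two common neighbours with $v_1$, and since $N(v_1)=\{v_2,v_3\}$ these are forced to be $v_2$ and $v_3$. That gives $N(v_1)\subseteq N(v)$ cleanly and completely.

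Your suspicion about the second step is justified, but the problem is not a missing idea: the reverse containment is simply false, so no amount of exploiting $Q^2=I$ will close the gap. Take $G=\overline{K_{1,3}\cup K_2}$ on vertices $v_1,u_1,u_2,u_3,v_2,v_3$, so that $v_1$ is adjacent only to $v_2$ and $v_3$, the vertices $u_1,u_2,u_3$ are pairwise adjacent and each adjacent to $v_2$ and $v_3$, and $v_2\not\sim v_3$. Let
\[
U=\left[\begin{array}{cc} 1/\sqrt2 & 0\\ 0 & 1/\sqrt6\\ 0 & 1/\sqrt6\\ 0 & 1/\sqrt6\\ 1/2 & 1/2\\ 1/2 & -1/2\end{array}\right],
\qquad Q=I-2UU^{T}.
\]
Since $U^{T}U=I_2$, the matrix $Q$ is a symmetric involution, and the off-diagonal entry $Q_{xy}=-2\langle U_x,U_y\rangle$ (rows of $U$) vanishes precisely for the pairs $\{v_1,u_k\}$ and $\{v_2,v_3\}$; hence $Q\in S(G)$ and $q(G)=2$. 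Here $v_1$ has degree exactly two with neighbours $v_2,v_3$, yet $N(u_1)=\{u_2,u_3,v_2,v_3\}\neq\{v_2,v_3\}$. This is consistent with Theorem~\ref{neighbors}, exactly as you predicted: every independent set one can form already has $\{v_2,v_3\}$, together with a $u_j$ where needed, among its pairwise common neighbourhoods, so the theorem yields no obstruction. The corollary should therefore be read as asserting only that every $v\neq v_2,v_3$ is adjacent to both $v_2$ and $v_3$, i.e.\ $N(v)\supseteq N(v_1)$; your first step proves precisely this, and the ``crux'' you isolate is an attempt to prove a strengthening that does not hold.
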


Along these lines, we also note that if $G$ is a connected graph with $q(G)=2$, then for any independent set of vertices $S$, we have $|S|\leq n/2$. Thus for any graph $G$ with $q(G)$ being two, we have a basic upper bound on the size of independent sets in $G$.

As a final example, recall that $q(K_n)=2$ whenever $n \geq 2$. We can build on this result
for complete graphs with a single edge deleted.

\begin{proposition}
Suppose $G$ is obtained from $K_n$ by deleting a single edge $e$. Then 
\[ q(G) = \left\{ \begin{array} {ll}
                   1,  &   {\rm if} \; n=2; \\
                   3, &  {\rm if } \; n=3; \\
                   2,   &  {\rm otherwise}. \end{array}\right.\]
 \end{proposition}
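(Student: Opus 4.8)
The plan is to treat the three ranges of $n$ separately, the first two being immediate from results already in hand. When $n=2$, deleting the single edge of $K_2$ leaves the empty graph on two vertices, so $q(G)=1$ by Lemma~\ref{lem:q=1}. When $n=3$, deleting an edge of $K_3$ leaves a path $P_3$, and $q(P_3)=3$ by Proposition~\ref{paths} (equivalently, the two non-adjacent vertices are joined by a unique shortest path of length $2$, so Theorem~\ref{thm:uniqueminpath} already forces $q(G)\geq 3$, while $q(G)\leq 3$ is trivial). Thus the substance of the proposition is the claim that $q(G)=2$ for $n\geq 4$.

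For $n\geq 4$, relabel so that the deleted edge is $\{1,2\}$; then every pair of vertices is adjacent except $\{1,2\}$. Since $G$ is non-empty, Lemma~\ref{lem:q=1} gives $q(G)\geq 2$, so it remains only to produce a matrix $A\in S(G)$ with exactly two distinct eigenvalues. By the characterization recorded at the start of this section, this is equivalent to exhibiting a real symmetric orthogonal matrix $Q\in S(G)$, that is, one with $Q_{12}=0$ and all other off-diagonal entries nonzero. I would look for $Q$ of the special form $Q=2P-I$, where $P$ is an orthogonal projection of rank $2$; then $Q$ is automatically symmetric with $Q^2=I$ (eigenvalues $\pm1$), and since $Q_{ij}=2P_{ij}$ off the diagonal, the required pattern becomes a condition purely on the off-diagonal zero/nonzero pattern of $P$.

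Assigning to each vertex $i$ a vector $p_i\in\mathbb{R}^2$ and setting $P_{ij}=p_i\cdot p_j$, this $P$ is an orthogonal projection of rank $2$ precisely when $\sum_i p_i p_i^{\,T}=I_2$ (a Parseval-frame condition on $p_1,\dots,p_n$). The task thus reduces to a planar problem: place nonzero vectors $p_1,\dots,p_n$ in $\mathbb{R}^2$ so that $p_1\perp p_2$, no other pair is orthogonal, and $\sum_i p_i p_i^{\,T}=I_2$. Taking $p_1,p_2$ along the two coordinate axes (so they contribute nothing off-diagonal to the frame sum) and the remaining $n-2$ vectors in generic directions with both coordinates nonzero, the conditions $\sum_{i\geq3}c_id_i=0$ and $\sum_{i\geq3}c_i^2<1$, $\sum_{i\geq3}d_i^2<1$ (writing $p_i=(c_i,d_i)$) can be met by scaling, after which the axial lengths of $p_1,p_2$ are fixed to complete the frame. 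This is exactly where $n\geq4$ is needed: with at least $n-2\geq2$ free vectors there is room to kill the off-diagonal frame entry while keeping all non-special inner products nonzero, whereas for $n=3$ the single free vector would have to satisfy $c_3d_3=0$ yet remain non-orthogonal to both axes, which is impossible. This matches the jump to $q(G)=3$ there, and is also consistent with Corollary~\ref{cor:22adj}, since the two non-adjacent vertices have $n-2$ common neighbours, which is at least $2$ exactly when $n\geq4$.

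The main obstacle is the construction of the previous paragraph: confirming that the planar configuration can be chosen to hit the three equality constraints of the Parseval condition while keeping every ``edge'' inner product nonzero and only the single ``non-edge'' inner product zero. I expect this to be routine for each fixed $n\geq4$ (a short explicit choice works, for instance two small generic vectors when $n=4$), so the only real care needed is in presenting the argument cleanly and uniformly in $n$.
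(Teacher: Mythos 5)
Your construction is essentially the paper's: for $n\ge 4$ the paper also exhibits a symmetric orthogonal matrix $Q=I-2(u_1u_1^T+u_2u_2^T)$ built from two explicit orthonormal vectors $u_1,u_2$ (with the deleted edge's two coordinates supported on disjoint parts of $u_1$ and $u_2$), and your planar frame vectors $p_i$ are precisely the rows of $[u_1\ u_2]$, the Parseval condition $\sum_i p_ip_i^T=I_2$ being exactly their orthonormality. The only difference is that the paper writes the two vectors down explicitly while you establish their existence by a (correct and routine) genericity-and-scaling argument; the $n=2,3$ cases are dispatched the same way in both.
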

 
\begin{proof} The cases $n=2,3$ follow easily from previous facts. So suppose $n \geq 4$. We will 
construct a symmetric orthogonal matrix $Q$ in $S(G)$, assuming the edge deleted was $e = \{1,n\}$, without loss of generality. In this case set,

\[ u_1 = \frac{1}{\sqrt{n-1}} \left[ \begin{array}{c} 1 \\ \hline e_{n-2} \\ \hline 0 \end{array} \right], \]
where $e_{n-2}$ is the $(n-2)$-vector of all ones. Then choose $u_2'$ to be orthogonal to $u_1$ as 
follows 
\[ u_2' =   \left[ \begin{array}{c} 0 \\ \hline e_{n-3} \\ \hline -(n-3) \\ \hline 1 \end{array} \right], \]                
where $e_{n-3}$ is the $(n-3)$-vector of all ones. Then set $u_2 = \frac{1}{\left\| u_2'\right\|}
u_2'$. Finally, set $Q = I-2(u_1u_1^T + u_2u_2^T)$. Then it follows that $Q$ is orthogonal and a
basic calculation will show that $Q \in S(G)$. Hence $q(G)=2$. \end{proof}
 
\section{Join of two graphs}

In the previous section we found several restrictions on a graph $G$ for which
$q(G)=2$. In this section, we
show that, despite these restrictions, a surprisingly large number of
graphs satisfy this property.

Let $G$ and $H$ be graphs, then the {\em join of $G$ and $H$},
denoted by $G \vee H$, is the graph with vertex set $V(G) \cup V(H)$ and
edge set $E(G) \cup E(H) \cup \{\{g,h\} \;|\; g\in V(G), h\in V(H)\}$.

A real matrix $R$ of order $n$ is called an {\em $M$-matrix} if it can
be written in the form $R=sI-B$ for some $s>0$ and entry-wise nonnegative matrix 
$B$ such that its spectral radius satisfies 
$\rho(B)\leq s$. Recall that the {\em spectral radius} of a square matrix $B$ is 
defined to be $\rho(B) = \max \{ |\lambda| : \lambda \in \sigma(B) \}$. In the case that $\rho(B)<s$, then $R$ is called a
{\em nonsingular $M$-matrix}. Recall that for $A \in M_n$, we call $B \in M_n$ a {\em square root of $A$}
  if $B^2=A$.  In \cite{AS82}, it is shown that an
$M$-matrix $R$ has an $M$-matrix as a square root if and only if $R$
has a certain property (which the authors refer to as {\em property
  c}).   It is also known that all nonsingular $M$-matrices have
``property c''.

The following theorem is proved in \cite{AS82}. If $P$ is a square matrix, $\diag(P)$ means the diagonal entries of $P$. 
\begin{theorem}\cite[Thm. $4$]{AS82}\label{ASthm}
  Let $R$ be an $M$-matrix of order $n$, and let $R=s(I-P)$ be a
  representation of $R$ for sufficiently large $s$ such that
  $\diag(P)$ is entry-wise positive and $\rho(P)\leq 1$.  Then $R$ has an $M$-matrix as a
  square root if and only if $R$ has ``property c.''  In this case,
  let $Y^*$ denote the limit of the sequence generated by
\[
Y_{i+1}=\frac{1}{2}(P+Y_i^2),\quad Y_0=0.
\]
Then $\sqrt s(I-Y^*)$ is an $M$-matrix with ``property c'' which is a square root of $R$.
\end{theorem}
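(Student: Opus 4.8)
The plan is to read the recursion as a fixed-point method for the square-root equation and then run a monotone-convergence argument. If $\sqrt{s}\,(I-Y)$ is to be a square root of $R=s(I-P)$, then $(I-Y)^2=I-P$, which rearranges to $Y=\tfrac12\bigl(P+Y^2\bigr)$; this is exactly the equation fixed by the iteration $Y_{i+1}=\tfrac12(P+Y_i^2)$. So the whole problem reduces to showing that the sequence $\{Y_i\}$ started at $Y_0=0$ converges to a fixed point $Y^*$ and that $\sqrt{s}\,(I-Y^*)$ is an $M$-matrix with ``property c.'' A convenient way to see the target is to recognize $Y^*=I-(I-P)^{1/2}$, so the construction is really computing the principal square root of $I-P$.

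For convergence I would first record that, in the chosen representation, $P$ is entry-wise nonnegative with $\rho(P)\le 1$ (this is what forces $s$ to be taken large). Then a short induction shows the iterates are entry-wise nonnegative and monotonically nondecreasing: $Y_1=\tfrac12 P\ge 0=Y_0$, and if $Y_i\ge Y_{i-1}\ge 0$ then $Y_i^2\ge Y_{i-1}^2$ entry-wise, so $Y_{i+1}=\tfrac12(P+Y_i^2)\ge Y_i$. To pin down an upper bound I would compare against the scalar recursion $y_{i+1}=\tfrac12(p+y_i^2)$ with $0\le p\le 1$, whose monotone limit is the smaller root $1-\sqrt{1-p}\le 1$; testing the matrix iteration against a nonnegative Perron direction $w$ with $Pw\le\rho(P)w\le w$ produces an entry-wise bound of this shape and shows $\{Y_i\}$ is bounded above. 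Monotone plus bounded gives entry-wise convergence to some $Y^*\ge 0$, and passing to the limit in the recursion yields $Y^*=\tfrac12(P+(Y^*)^2)$, hence $(I-Y^*)^2=I-P$ and $\bigl(\sqrt{s}\,(I-Y^*)\bigr)^2=R$.

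It then remains to argue that $\sqrt{s}\,(I-Y^*)$ is itself an $M$-matrix with property c. From the comparison bound one reads off $Y^*\ge 0$ and $\rho(Y^*)\le 1$, so $I-Y^*$ has nonpositive off-diagonal entries and, after the shift, spectral radius at most one; that is precisely the $M$-matrix condition, and the explicit nonnegative limit form of $Y^*$ delivers property c. This is the ``if'' direction, and it is exactly here that property c does its work: it is the structural hypothesis guaranteeing that the comparison bound is attained and that the monotone limit stays inside the $M$-matrix cone rather than escaping it.

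The main obstacle is the converse. Here I would begin with an arbitrary $M$-matrix square root $T$ of $R$ and attempt to deduce property c. Property c is a condition on the critical (singular) part of $R$ — concretely the semiconvergence of $P$, i.e.\ that the eigenvalue $1$ of $P$ (equivalently the eigenvalue $0$ of $R$) is semisimple and is the only eigenvalue of modulus one. The difficulty is that this is a spectral-combinatorial statement about the reducible (Frobenius) normal form of $R$: one must show that if $R$ failed property c — say it carried a nontrivial Jordan block at its critical eigenvalue, or two singular diagonal blocks standing in an access relation — then squaring any candidate $T$ with nonpositive off-diagonal entries would be forced to reproduce an incompatible Jordan structure or access pattern, contradicting $T^2=R$. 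Carrying out this block-by-block analysis of the normal form, rather than the essentially routine convergence argument of the forward direction, is where the real content of the theorem lies.
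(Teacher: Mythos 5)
First, a point of comparison: the paper does not prove this statement at all --- it is quoted verbatim as Theorem~4 of Alefeld and Schneider \cite{AS82} and used as a black box in the proof of Theorem~\ref{join:thm} --- so there is no in-paper argument to measure yours against, and your proposal must stand on its own. Judged that way, it has the right skeleton for the ``if'' direction (the identity $(I-Y)^2=I-P\Leftrightarrow Y=\tfrac12(P+Y^2)$, the entrywise monotonicity $0=Y_0\le Y_1\le\cdots$, passage to the limit), but the one step that carries all the weight, boundedness of the iterates, is not valid as you state it. You propose to bound $Y_i$ using a vector $w>0$ with $Pw\le\rho(P)w\le w$, but such a positive sub-invariant vector need not exist even when $R$ has property c. Take $R=\left[\begin{smallmatrix}0&-1\\0&1/2\end{smallmatrix}\right]$: this is a singular $M$-matrix whose eigenvalue $0$ is simple, so property c holds, and indeed $\left[\begin{smallmatrix}0&-\sqrt2\\0&1/\sqrt2\end{smallmatrix}\right]$ is an $M$-matrix square root; yet for every admissible $s$ the matrix $P=I-R/s$ has positive diagonal, $\rho(P)=1$, and $Pw\le w$ forces $w_2\le 0$. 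So your comparison argument silently assumes something like irreducibility of $P$; in the singular reducible case one must work harder (e.g.\ establish the bound for the nonsingular matrices $R+\epsilon I$, where it is easy, and use property c to control the limit as $\epsilon\to 0$, or argue block-by-block on the Frobenius normal form). Relatedly, the assertion that the limit $\sqrt s\,(I-Y^*)$ itself has property c is stated but never argued; it does not follow merely from $Y^*\ge 0$ and $\rho(Y^*)\le1$.

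The second gap is the converse. You correctly identify ``$R$ has an $M$-matrix square root $\Rightarrow$ $R$ has property c'' as the substantive half and correctly locate where the difficulty sits (the index of the eigenvalue $0$, the reducible normal form), but what you give is a plan, not a proof: no argument is supplied for why the existence of an $M$-matrix $T$ with $T^2=R$ forces the index of $0$ in $R$ to be at most one. Since you yourself flag this as ``where the real content of the theorem lies,'' the proposal as written establishes neither direction completely.
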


Using Theorem \ref{ASthm}, we can prove the following.

\begin{theorem}\label{join:thm}
Let $G$ be a connected graph, then $q(G\vee G)=2$.
\end{theorem}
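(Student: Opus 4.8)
The plan is to use the characterization recorded at the start of Section~\ref{sec:q=2}: it suffices to exhibit a real symmetric orthogonal matrix $Q\in S(G\vee G)$, since $G\vee G$ is non-empty and so $q(G\vee G)\ge 2$ by Lemma~\ref{lem:q=1}. Writing the $2n$ vertices of $G\vee G$ as two copies of $V(G)$, a matrix in $S(G\vee G)$ has the block form $\left(\begin{smallmatrix} A & B \\ B^{T} & C\end{smallmatrix}\right)$, where $A,C\in S(G)$ and $B$ is an $n\times n$ matrix that must be \emph{entry-wise nonzero}, because in the join every vertex of one copy is adjacent to every vertex of the other (including the two copies of the same vertex). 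I would therefore look for $Q$ of the form
\[ Q = \begin{pmatrix} A & B \\ B & -A \end{pmatrix}, \]
with $A\in S(G)$ and $B$ symmetric. Squaring, the equation $Q^{2}=I$ is equivalent to the two conditions $A^{2}+B^{2}=I$ and $AB=BA$, so the entire problem reduces to producing, for a suitable $A$, a symmetric $B$ that is a square root of $I-A^{2}$, commutes with $A$, and has no zero entries.

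This is where Theorem~\ref{ASthm} enters. First I would fix $A\in S(G)$ to be entry-wise nonnegative with the exact pattern of $G$, with strictly positive diagonal, and scaled so that $\rho(A)<1$. Then $A^{2}\ge 0$ entry-wise, so $I-A^{2}$ is symmetric with nonpositive off-diagonal entries and eigenvalues $1-\lambda^{2}>0$; that is, $I-A^{2}$ is a symmetric positive definite, hence nonsingular, $M$-matrix, and as such automatically has ``property c.'' Theorem~\ref{ASthm} then yields a symmetric $M$-matrix square root $B=\sqrt{s}\,(I-Y^{*})$ built from the iteration $Y_{i+1}=\tfrac12(P+Y_i^{2})$, $Y_0=0$, where $I-A^{2}=s(I-P)$. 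Because every $Y_i$ is a polynomial in $P$ and $P$ is an affine function of $A^{2}$, the limit $B$ is a function of $A^{2}$ and therefore commutes with $A$; it is also symmetric and satisfies $B^{2}=I-A^{2}$. Thus both orthogonality conditions hold automatically, the diagonal blocks $A$ and $-A$ already carry the pattern of $G$, and $Q$ is symmetric orthogonal.

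The main obstacle, and precisely the point the $M$-matrix machinery is designed to handle, is verifying that the block $B$ is entry-wise nonzero, since any accidental cancellation would delete an edge of the join. I would control this through the sign structure forced by the theorem. For the off-diagonal entries, the nonnegative limit $Y^{*}$ dominates each $Y_i$ and hence every power $P^{m}$ occurring in the iteration; since $G$ is connected and $A$ has positive diagonal, $P$ is a nonnegative irreducible matrix with positive diagonal, hence primitive, so $P^{m}$ is entry-wise positive for large $m$ and therefore $Y^{*}_{ij}>0$ for all $i\ne j$, giving $B_{ij}=-\sqrt{s}\,Y^{*}_{ij}\ne 0$. For the diagonal entries I would invoke that $B$ is a \emph{nonsingular} $M$-matrix (it is an $M$-matrix and $B^{2}=I-A^{2}$ is nonsingular), and nonsingular $M$-matrices have strictly positive diagonal, so $B_{ii}>0$. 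Hence $B$ has no zero entries, $Q\in S(G\vee G)$, and being symmetric orthogonal with a nonzero off-diagonal block it has exactly the two eigenvalues $\pm1$; together with Lemma~\ref{lem:q=1} this gives $q(G\vee G)=2$.
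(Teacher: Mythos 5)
Your proposal is correct and follows essentially the same route as the paper: the same block form $\bigl(\begin{smallmatrix} A & B \\ B & -A\end{smallmatrix}\bigr)$, the same appeal to Theorem~\ref{ASthm} for an $M$-matrix square root of $I-A^2$ commuting with $A$, and the same primitivity argument (via irreducibility of $A(G)$ and positive diagonal) to force the off-diagonal entries of $Y^*$ to be positive. The only cosmetic differences are that the paper fixes a specific $P=\frac{2n-1}{4n^2}\left(\frac1n A(G)+I\right)^2$ rather than a generic nonnegative $A$ with $\rho(A)<1$, and it bounds $\operatorname{trace}(Y^*)<1$ to get the positive diagonal of $I-Y^*$ where you invoke the positive-diagonal property of nonsingular $M$-matrices; both are valid.
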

\begin{proof}
Suppose $G$ is a connected graph on $n$ vertices.
The goal of this proof is to construct a matrix $P$ such that
\[
Q =\left[ 
\begin{array}{cc}
\sqrt{P} & \sqrt{I-P} \\
\sqrt{I-P} & -\sqrt{P}
\end{array}
\right]
\]
is in $S(G \vee G)$.
If we can construct such a matrix $P$ then $Q^2 = \left[ 
\begin{array}{cc}
I & 0\\
0 & I
\end{array}
\right]$
and $Q$ has exactly two eigenvalues. Let $A(G)$ be the adjacency matrix of $G$ and set
\[
P=\frac{2n-1}{4n^2}\left(\frac{1}{n}A(G)+I\right)^2.
\]
Note that $\diag(P)$ is entry-wise positive.  By Gershgorin's disc theorem (see \cite[pg. 89]{BR}), every
eigenvalue of $\frac{1}{n}A(G)+I$ belongs to the interval $(0,2)$, and
hence, every eigenvalue of $P$ belongs to the interval
$\left(0,\frac{2n-1}{n^2}\right)$.  Therefore, $\rho(P)\leq 1$.

Consider the matrix $R=I-P$. Then $R$ is an $M$-matrix that satisfies
the conditions of Theorem \ref{ASthm} with $s=1$.  Note that if $S$ is
a matrix with eigenvalue $\lambda$, then $1-\lambda$ is an eigenvalue
of $I-S$.  Thus, the eigenvalues of $R$ are in the interval
$\left(\left(\frac{n-1}{n}\right)^2,1\right)$.  Hence, $R$ is
nonsingular and thus has ``property c.''  By Theorem \ref{ASthm}, $R$
has an $M$-matrix as a square root of the form $I-Y^*$, where $Y^*$ is
the limit of the sequence generated by
\begin{eqnarray}
Y_{i+1}=\frac{1}{2}(P+Y_i^2),\quad Y_0=0.\label{ASeqn}
\end{eqnarray}
Note that $Y^*$ satisfies
\begin{eqnarray}
(I-Y^*)^2+\frac{2n-1}{4n^2}\left(\frac{1}{n}A(G)+I\right)^2=I.\label{sq-eqn}
\end{eqnarray}
As $G$ is a connected graph, its adjacency matrix $A(G)$ is an irreducible nonnegative matrix.
Thus, $(aA(G)+bI)^n>0$ for every $a,b>0$, and hence, $Y^*>0$ by (\ref{ASeqn}).

As $P$ is a real symmetric matrix, the sequence (and consequently the
limit) of (\ref{ASeqn}) are real symmetric matrices.  In particular,
$Y^*$ is a real symmetric matrix that may be written as a polynomial
in $A(G)$.  Therefore, $I-Y^*$ commutes with $A(G)$.

If $\lambda_1,\lambda_2,\ldots,\lambda_n$ are the eigenvalues of $Y^*$, then
\[
{\rm trace}(Y^*)=\sum_{i=1}^n \lambda_i<1
\]
as each eigenvalue of $Y^*$ belongs to the interval $\left(0,\frac{1}{n}\right)$.
Therefore, $\diag(I-Y^*)>0$ implying that $I-Y^*$ is an entry-wise nonzero matrix.

Finally consider the block matrix
\[
Q=\left[\begin{array}{cc}
\frac{\sqrt{2n-1}}{2n}\left(\frac{1}{n}A(G)+I\right) & I-Y^*\\
I-Y^* & -\frac{\sqrt{2n-1}}{2n}\left(\frac{1}{n}A(G)+I\right)\\
\end{array}\right].
\]
By (\ref{sq-eqn}), $Q$ is an orthogonal matrix with two distinct eigenvalues.
As $I-Y^*$ is entry-wise nonzero, $Q\in S(G\vee G)$, hence, $q(G\vee G)=2$.
\end{proof}

Recall that for any graph $G=(V,E)$, the graph $\overline{G} = (V, \overline{E})$, is
called the {\em complement of $G$} whenever, $\overline{E} = \{ \{i,j\} | \{i,j\} \not\in E\}$.

\begin{corollary}
There are graphs $G$ for which the gap between $q(G)$ and $q(\overline{G})$ can 
grow without bound as a function of the number of vertices of $G$.
\end{corollary}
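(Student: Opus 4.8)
The plan is to produce, for each $m$, a single graph $G$ on $2m$ vertices with $q(G)=2$ whose complement satisfies $q(\overline G)=m$, so that the gap $q(\overline G)-q(G)=m-2$ is unbounded while $|V(G)|=2m$. Two ingredients drive this: Theorem~\ref{join:thm}, which forces $q=2$ for the self-join of a connected graph, and Proposition~\ref{paths}, which pins $q(P_m)=m$. The bridge between them is the elementary complementation identity $\overline{H\vee H}=\overline H\cup\overline H$, valid for any graph $H$ (every cross edge of the join is absent from its complement, while each copy of $H$ becomes $\overline H$).

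Concretely, I would take $H=\overline{P_m}$ and set $G=H\vee H=\overline{P_m}\vee\overline{P_m}$, which has $2m$ vertices. Provided $H$ is connected (discussed below), Theorem~\ref{join:thm} gives $q(G)=q(H\vee H)=2$ immediately. For the complement, the identity above yields
\[
\overline G=\overline{H}\cup\overline{H}=P_m\cup P_m .
\]
To evaluate $q(P_m\cup P_m)$, note that because the two path components share no edges, every $A\in S(P_m\cup P_m)$ is block diagonal, $A=A_1\oplus A_2$ with $A_1,A_2\in S(P_m)$, and its spectrum is the union of the two spectra. Proposition~\ref{paths} gives $q(P_m)=m=|V(P_m)|$, so in fact \emph{every} matrix in $S(P_m)$ has exactly $m$ distinct eigenvalues; hence $A$ has at least $m$ distinct eigenvalues, and taking $A_1=A_2$ shows this is attained. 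Thus $q(\overline G)=q(P_m\cup P_m)=m$.

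The only point requiring genuine care — and hence the main obstacle — is the hypothesis of Theorem~\ref{join:thm}, namely that $H=\overline{P_m}$ be connected. I would verify this directly for $m\ge 4$: a disconnection of $\overline{P_m}$ corresponds to a partition of $V(P_m)$ into two nonempty parts with all cross edges present in $P_m$, i.e.\ a spanning complete bipartite subgraph; since $P_m$ has maximum degree $2$ both parts would have size at most two, forcing $m\le 4$, and the remaining case $m=4$ fails because $P_4$ contains no spanning $K_{2,2}$. With connectivity in hand, the family $G=\overline{P_m}\vee\overline{P_m}$ for $m\ge 4$ satisfies $q(\overline G)-q(G)=m-2$, which grows without bound as a function of $|V(G)|=2m$, completing the argument.
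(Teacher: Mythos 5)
Your construction $G=\overline{P_m}\vee\overline{P_m}$ is exactly the one the paper uses (with $m\ge 4$), invoking Theorem~\ref{join:thm} for $q(G)=2$ and the identity $\overline{G}=P_m\cup P_m$ together with Proposition~\ref{paths} for $q(\overline{G})=m$. Your proof is correct and takes essentially the same approach as the paper, merely spelling out the connectivity of $\overline{P_m}$ and the disjoint-union spectrum argument that the paper leaves implicit.
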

\begin{proof} Let $G = \overline{P_n} \join \overline{P_n}$ with $n \geq 4$. Then $q(G) =2$,
while $q(\overline{G}) = q(P_n \cup P_n) = n$. \end{proof}

Also, note Theorem \ref{join:thm} fails to hold for two different graphs. For example, 
using Theorem \ref{neighbors}, we have that $q(P_1 \vee P_4) >2$. It is still unresolved 
whether or not the condition that $G$ be connected is required in Theorem \ref{join:thm}.

\section{Bipartite Graphs and Graph Products}
\label{sec:bipartite}

Let $G$ be a bipartite graph with parts $X$ and $Y$ such that $0<|X|=m\leq
n=|Y|$. Define $\mathcal{B}(G)$ to be the set of all real $m\times n$
matrices $B=[b_{ij}]$ whose rows and columns are indexed by $X$ and $Y$,
respectively, and for which $b_{ij}\neq 0$ if and only if $\{i,j\}\in E(G)$. We have the following:

\begin{theorem}\label{bipartite}
  For any non-empty bipartite graph $G$, if $B\in \mathcal{B}(G)$,
  then $q(G)\leq 2 q(BB^T) +1$.
\end{theorem}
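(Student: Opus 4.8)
The plan is to exploit the bipartite block structure. Since $G$ is bipartite with parts $X$ (size $m$) and $Y$ (size $n$), any matrix in $S(G)$ has the block form $\begin{bmatrix} D_1 & B \\ B^T & D_2 \end{bmatrix}$ where $B \in \mathcal{B}(G)$ captures the edges between $X$ and $Y$, and $D_1, D_2$ are diagonal. The idea is to build a symmetric matrix $A \in S(G)$ whose spectrum is controlled by the spectrum of $BB^T$. The natural candidate is the matrix $A = \begin{bmatrix} 0 & B \\ B^T & 0 \end{bmatrix}$, whose eigenvalues are $\pm\sqrt{\mu}$ for each nonzero eigenvalue $\mu$ of $BB^T$ (this is the standard singular-value symmetrization), together with zeros. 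So first I would recall that the distinct eigenvalues of $A$ are exactly $\{\pm\sqrt{\mu} : \mu \in \sigma(BB^T)\}$, possibly including $0$.

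Next I would count. If $BB^T$ has $q(BB^T)$ distinct eigenvalues, then taking square roots and prepending signs can at most double the count, and then the eigenvalue $0$ of $A$ (arising from the size mismatch $m \le n$, since $A$ is $(m+n) \times (m+n)$ but $B$ has rank at most $m$) may add one more distinct value. This is precisely where the bound $2q(BB^T) + 1$ comes from: each distinct eigenvalue $\mu$ of $BB^T$ gives at most two distinct eigenvalues $\pm\sqrt{\mu}$ of $A$, and the worst case adds a single extra $0$. I would carefully track whether $0$ is already an eigenvalue of $BB^T$ (if so, its square root contributes only the single value $0$ rather than a $\pm$ pair), since handling the $0$ eigenvalue correctly is what pins down the additive constant.

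The main subtlety, and the step I expect to be the main obstacle, is ensuring the constructed matrix genuinely lies in $S(G)$ and that we are free to choose the off-diagonal block to be a matrix realizing $q(BB^T)$. The block $A = \begin{bmatrix} 0 & B \\ B^T & 0 \end{bmatrix}$ has zero diagonal, and its off-diagonal pattern within the parts $X$ and $Y$ is entirely zero, which is exactly correct because $G$ is bipartite (no edges inside $X$ or inside $Y$), while the pattern of $B$ matches the edges between $X$ and $Y$ by the definition of $\mathcal{B}(G)$. Thus for \emph{any} $B \in \mathcal{B}(G)$, this $A$ is automatically in $S(G)$. I would then simply compute $q(A)$ for the given $B$ and read off the inequality, without needing to optimize $B$ further.

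To carry this out cleanly I would state and use the eigenvalue correspondence as a short lemma: for a block matrix $A = \begin{bmatrix} 0 & B \\ B^T & 0 \end{bmatrix}$, one has $\sigma(A) = \{\pm\sqrt{\mu} : \mu \in \sigma(BB^T), \mu \neq 0\} \cup \{0\}$, where $0$ appears iff $m < n$ or $0 \in \sigma(BB^T)$. The verification is a direct eigenvector computation: if $BB^T w = \mu w$ with $\mu > 0$, then $(w^T, \pm\sqrt{\mu}^{-1} w^T B)^T$ are eigenvectors of $A$ for $\pm\sqrt{\mu}$. Counting distinct values then gives at most $2\,q(BB^T) + 1$, completing the argument.
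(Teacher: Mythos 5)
Your proposal is correct and follows essentially the same route as the paper: both take the matrix $A=\left[\begin{smallmatrix}0&B\\ B^T&0\end{smallmatrix}\right]\in S(G)$ and observe that its eigenvalues are $\pm\sqrt{\mu}$ over the nonzero eigenvalues $\mu$ of $BB^T$, together with possibly $0$, yielding at most $2q(BB^T)+1$ distinct values. Your treatment is in fact slightly more careful than the paper's (explicit eigenvector verification and tracking of the zero eigenvalue), but the argument is the same.
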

\begin{proof} 
Let $B\in \mathcal{B}(G)$ and consider $A\in \mathcal{S}(G)$ with $A=\left[ \begin{array}{cc}
0& B \\
B^T & 0 \\
\end{array} \right].$
It is well known that $BB^T$ and $B^TB$ have the same nonzero
eigenvalues, so the number of distinct nonzero eigenvalues of $A^2$ is at most
$q(B^TB)$. Moreover, the eigenvalues of $A$ are of the form $\pm \sqrt{\lambda}$, where $\lambda$ is an eigenvalue of $A^2$. Thus, $A$ has at most $2q(B^TB)+1$ distinct eigenvalues.  \end{proof}

If $B$ is square, then $B^TB$ and $BB^T$ have the same eigenvalues, this implies the following corollary.

\begin{corollary}\label{cor:bipartite2q}
  For any non-empty bipartite graph $G$ with equal sized parts, $q(G)\leq 2 q(BB^T)$.
\end{corollary}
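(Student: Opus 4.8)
\emph{Proof proposal.} The plan is to reuse the very matrix from the proof of Theorem~\ref{bipartite} and merely count the eigenvalues of $A$ more sharply under the new hypothesis. Given $B\in\mathcal{B}(G)$, which is now square since $|X|=|Y|$, set $A=\left[\begin{smallmatrix} 0 & B \\ B^T & 0\end{smallmatrix}\right]\in S(G)$, so that $A^2=\left[\begin{smallmatrix} BB^T & 0 \\ 0 & B^TB\end{smallmatrix}\right]$. The crucial use of squareness is that $BB^T$ and $B^TB$ then share a characteristic polynomial, hence have exactly the same spectrum (not merely the same \emph{nonzero} eigenvalues). Therefore the two diagonal blocks contribute the same set of distinct values, and $A^2$ has precisely $q(BB^T)$ distinct eigenvalues.

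First I would recall that, $A$ being symmetric with zero diagonal blocks, its eigenvalues are exactly $\pm\sqrt{\lambda}$ as $\lambda$ runs over $\sigma(A^2)$; since $A^2$ is positive semidefinite each such $\lambda$ is nonnegative. Then I would split on whether $0\in\sigma(BB^T)$. If $B$ is nonsingular, all $q(BB^T)$ distinct eigenvalues of $A^2$ are positive, each yielding two distinct eigenvalues $\pm\sqrt{\lambda}$ of $A$ and no zero eigenvalue, so $q(A)=2q(BB^T)$. If $B$ is singular, then $0$ is one of the $q(BB^T)$ distinct eigenvalues of $A^2$, leaving $q(BB^T)-1$ positive ones; these produce $2(q(BB^T)-1)$ nonzero eigenvalues of $A$ together with the single eigenvalue $0$, for a total of $2q(BB^T)-1$. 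In either case $q(A)\leq 2q(BB^T)$, and since $A\in S(G)$ we conclude $q(G)\leq q(A)\leq 2q(BB^T)$.

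The one place the square hypothesis does real work is in collapsing the spectra of $BB^T$ and $B^TB$, which is exactly what removes the $+1$ present in Theorem~\ref{bipartite}: there the possible zero eigenvalue of $A$ was an \emph{extra} value sitting on top of the distinct nonzero eigenvalues, whereas here any zero eigenvalue of $A$ is already accounted for inside the count $q(BB^T)$. I do not anticipate a genuine obstacle; the only care needed is the bookkeeping of the zero eigenvalue so that it is not double-counted, together with the observation that in the nonsingular case $A$ has full rank $2\,\mathrm{rank}(B)$ and hence truly avoids a zero eigenvalue.
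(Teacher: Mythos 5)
Your proposal is correct and follows essentially the same route as the paper: the paper derives the corollary from the construction in Theorem~\ref{bipartite} by observing that a square $B$ forces $BB^T$ and $B^TB$ to share their entire spectrum (zero included), which is exactly what absorbs the $+1$. Your more careful bookkeeping of the zero eigenvalue simply makes explicit what the paper leaves as a one-line remark.
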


\begin{lemma}\label{lem:bipartite2}
  For any non-empty bipartite graph $G$, if there is a matrix $B\in
  \mathcal{B}(G)$ with orthogonal rows and orthogonal columns, then $q(G)=2$.
\end{lemma}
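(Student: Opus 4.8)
The plan is to exploit the characterization established at the start of Section~\ref{sec:q=2}: for a nonempty graph $H$, we have $q(H)=2$ if and only if $S(H)$ contains a real symmetric orthogonal matrix. So the task reduces to producing, from the given matrix $B\in\mathcal{B}(G)$ with orthogonal rows and orthogonal columns, a symmetric orthogonal matrix in $S(G)$.

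First I would write the natural candidate. Following the construction in the proof of Theorem~\ref{bipartite}, set
\[
A=\left[\begin{array}{cc} 0 & B \\ B^T & 0 \end{array}\right],
\]
which lies in $S(G)$ because $G$ is bipartite with parts $X,Y$ and $B$ has the prescribed zero--nonzero pattern. This $A$ is automatically symmetric, so the only remaining issue is to arrange that it be orthogonal, i.e. that $A^2=I$. Computing the blocks gives
\[
A^2=\left[\begin{array}{cc} BB^T & 0 \\ 0 & B^TB \end{array}\right],
\]
so $A$ is orthogonal precisely when $BB^T=I_m$ and $B^TB=I_n$. The hypotheses that the rows of $B$ are orthogonal and the columns of $B$ are orthogonal tell us that $BB^T$ and $B^TB$ are \emph{diagonal} matrices, but not necessarily the identity; the diagonal entries are the squared norms of the rows, respectively columns, of $B$.

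The key step, then, is a normalization: I would rescale the rows and columns of $B$ by positive diagonal factors to force both Gram matrices to equal the identity, while preserving the zero--nonzero pattern (so that the rescaled matrix still lies in $\mathcal{B}(G)$, since multiplying by nonzero scalars changes no entry from zero to nonzero or vice versa). Concretely, replacing $B$ by $D_1 B D_2$ with $D_1,D_2$ positive diagonal leaves the rows and columns orthogonal and scales the norms; the requirement $(D_1BD_2)(D_1BD_2)^T=I$ and $(D_1BD_2)^T(D_1BD_2)=I$ becomes a system of conditions on the diagonal entries. The main obstacle is checking that this system is simultaneously solvable when $m\neq n$ (equal-sized parts would be the clean case, cf. Corollary~\ref{cor:bipartite2q}); I expect that orthogonality of both rows and columns already forces the row-norms and column-norms to be mutually compatible, so that a suitable two-sided scaling exists, and I would verify this compatibility as the crux of the argument. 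Once $B$ is normalized so that $BB^T=I_m$ and $B^TB=I_n$, the matrix $A$ above is a symmetric orthogonal matrix in $S(G)$, whence $q(G)\le 2$; since $G$ is nonempty, Lemma~\ref{lem:q=1} gives $q(G)>1$, and therefore $q(G)=2$.
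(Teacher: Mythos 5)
Your proposal is correct and takes essentially the same route as the paper: the paper forms the same block matrix $A=\left[\begin{array}{cc}0&B\\B^T&0\end{array}\right]$, observes $A^2=I$, and concludes via Lemma~\ref{lem:q=1}. The only difference is that you flag the normalization as a potential obstacle, but it is immediate: pairwise orthogonality of the (nonzero) rows and of the columns forces $B$ to be square and invertible, so replacing $B$ by $(BB^T)^{-1/2}B$ --- a positive diagonal row-scaling that preserves the zero--nonzero pattern --- gives orthonormal rows, and a square matrix with orthonormal rows automatically has orthonormal columns, so the case $m\neq n$ you worry about cannot occur.
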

\begin{proof} If $B\in \mathcal{B}(G)$ has orthogonal rows and orthogonal columns, then $B$ is a square matrix.
Consider $A\in \mathcal{S}(G)$ with $ A=\left[ 
\begin{array}{cc}
0& B \\
B^T & 0 \\
\end{array} 
\right].$
Then, $A^2=I$, which implies that $A$ has at most two distinct eigenvalues. Thus,
by Lemma~\ref{lem:q=1}, $q(G)=2$. \end{proof}

\begin{proposition}\label{m=n}
Consider a bipartite graph $G$ with parts $X$ and $Y$. If $q(G)=2$, then
  $|X|=|Y|$ and there exists an orthogonal matrix $B\in \mathcal{B}(G)$.
\end{proposition}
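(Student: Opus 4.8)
The plan is to use the characterization recorded at the start of Section~\ref{sec:q=2}: since $q(G)=2$, the set $S(G)$ contains a real symmetric orthogonal matrix $A$, and being symmetric and orthogonal it satisfies $A^2=I$. First I would write $A$ in block form with respect to the bipartition into $X$ and $Y$. Because $G$ has no edges inside $X$ and none inside $Y$, the two diagonal blocks are forced to be diagonal matrices, while the off-diagonal block is some $B\in\mathcal{B}(G)$; that is,
\[
A=\left[\begin{array}{cc} D & B\\ B^T & E\end{array}\right],
\]
with $D$ an $m\times m$ diagonal matrix, $E$ an $n\times n$ diagonal matrix, and $B$ an $m\times n$ matrix.

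The key step is to expand $A^2=I$ block by block. The $(1,1)$ and $(2,2)$ blocks give $BB^T=I_m-D^2$ and $B^TB=I_n-E^2$, and since $D$ and $E$ are diagonal, both right-hand sides are diagonal. Hence $BB^T$ and $B^TB$ are diagonal matrices, which says exactly that the rows of $B$ are pairwise orthogonal and that its columns are pairwise orthogonal. This observation is the heart of the argument; the remaining work is bookkeeping.

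To deduce $|X|=|Y|$, I would use that $G$ has no isolated vertex, so every row and every column of $B$ is nonzero. A set of pairwise-orthogonal nonzero vectors is linearly independent, so the rows of $B$ force $\mathrm{rank}(B)=m$ and the columns force $\mathrm{rank}(B)=n$; as $\mathrm{rank}(B)$ is a single integer, $m=n$ and $B$ is square. For the second conclusion, let $R=\mathrm{diag}(r_1,\dots,r_m)$, where $r_i>0$ is the norm of the $i$-th row of $B$, so that $BB^T=R^2$, and put $B'=R^{-1}B$. Then $B'(B')^T=R^{-1}(BB^T)R^{-1}=I$, so the square matrix $B'$ is orthogonal; and since $R^{-1}$ is a positive diagonal matrix, left-multiplication by it preserves the zero--nonzero pattern of $B$, so $B'\in\mathcal{B}(G)$, as required.

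I expect the only delicate point to be the nonvanishing of the rows and columns of $B$: both the rank count and the definition of $R^{-1}$ break down at an isolated vertex, and in fact the statement fails without such a hypothesis, as $C_4\cup K_1$ has $q=2$ while its parts have unequal size. Thus the substance of the proof is the single identity $A^2=I$ forcing $BB^T$ and $B^TB$ to be diagonal, and the one thing to secure beforehand is that the bipartite graph under consideration has no isolated vertex (for instance, that it is connected).
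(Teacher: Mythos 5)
Your proof is correct and follows essentially the same route as the paper: both extract a symmetric orthogonal matrix $A\in S(G)$ (via Lemma~\ref{lem:01}), write it in block form over the bipartition with diagonal blocks $D,E$ and off-diagonal block $B\in\mathcal{B}(G)$, and read off from $A^2=I$ that $BB^T$ and $B^TB$ are diagonal, hence that the rows and the columns of $B$ are orthogonal. Your additional bookkeeping --- the rank argument giving $|X|=|Y|$, the row-normalization $B'=R^{-1}B$ producing an orthogonal member of $\mathcal{B}(G)$, and the caveat that an isolated vertex would break both steps --- only makes explicit what the paper leaves implicit, and the isolated-vertex observation is a fair one, since the paper's statement does not formally exclude that degenerate case.
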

\begin{proof}
Label the vertices of $G$ so that the vertices of $X$ come first. Then, any matrix in $S(G)$ is of the form 
$A=\left[ \begin{array}{cc}
D_1& B \\\\
B^T & D_2 \\
\end{array} \right],$
where $D_1\in M_{|X|}$ and $D_2\in M_{|Y|}$ are diagonal matrices. Since $q(G)=2$, using Lemma~\ref{lem:01}, $A\in S(G)$ can be chosen with eigenvalues $-1,1$, therefore $A^2=I$. On the other hand,
\[ 
A^2=\left[
\begin{array}{cc}
D_1^2+BB^T& D_1B+BD_2 \\
B^T D_1+ D_2 B^T & B^T B +D_2^2 \\
\end{array} \right].
\]
This implies that $B B^T$ and $B^T B$ are diagonal. Therefore
the rows and columns of $B$ are orthogonal, and hence $|X|=|Y|$. \end{proof}

For any $n\geq 1$, there is a real orthogonal $n\times n$ matrix all
of whose entries are nonzero. For $n=1,2$ this is trivial, and for $n>2$, the matrix $B=I-\frac{2}{n} J$ is such an orthogonal matrix.

Using the above example and Lemma~\ref{lem:bipartite2}, we have
the following.

\begin{corollary}\label{K_{m,n}}
For any $m,n$ with $1\leq m\leq n,$ 
\[
q(K_{m,n}) = \left\{
\begin{array}{lr}
2, &  \textrm{if } m=n; \\
3, &  \textrm{if } m<n.
\end{array}
\right.
\]
\end{corollary}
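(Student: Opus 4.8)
The plan is to treat the two cases separately and, in the unequal case, to bracket $q(K_{m,n})$ between a lower and an upper bound of $3$. The three ingredients I would use are Lemma~\ref{lem:bipartite2} (a nonzero-entry matrix with orthogonal rows and columns forces $q=2$), Proposition~\ref{m=n} (if $q=2$ for a bipartite graph then its parts have equal size), and Theorem~\ref{bipartite} (the estimate $q(G)\leq 2q(BB^T)+1$). The observation that makes everything go is that $\mathcal{B}(K_{m,n})$ consists of \emph{all} $m\times n$ matrices with every entry nonzero, so I am completely free to choose a convenient $B$; combined with the fact recorded just before the statement that for every $n$ there is an $n\times n$ orthogonal matrix with no zero entries, this freedom is exactly what drives the proof.

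For the case $m=n$, I would simply take $B$ to be the promised $n\times n$ orthogonal matrix with all entries nonzero. Since $B$ is orthogonal its rows are orthogonal and its columns are orthogonal, and $B\in\mathcal{B}(K_{n,n})$ because every entry is nonzero. Lemma~\ref{lem:bipartite2} then yields $q(K_{n,n})=2$ immediately.

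For $m<n$, I would prove $q(K_{m,n})=3$ by matching bounds. For the lower bound, note that $K_{m,n}$ has an edge, so $q\geq 2$ by Lemma~\ref{lem:q=1}; and if $q$ were exactly $2$, then Proposition~\ref{m=n} would force the two parts to have equal size, contradicting $m<n$. Hence $q(K_{m,n})\geq 3$. For the upper bound, I would produce a $B\in\mathcal{B}(K_{m,n})$ with $q(BB^T)=1$ and invoke Theorem~\ref{bipartite} to get $q\leq 2\cdot 1+1=3$. Concretely, take the $n\times n$ orthogonal matrix with nonzero entries and let $B$ be its first $m$ rows: then $B$ is $m\times n$ with every entry nonzero, its rows are orthonormal, so $BB^T=I_m$ and $q(BB^T)=q(I_m)=1$. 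Combining the two bounds gives $q(K_{m,n})=3$.

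The only genuine step is producing, in the unequal case, a $B\in\mathcal{B}(K_{m,n})$ whose Gram matrix $BB^T$ has a single eigenvalue---equivalently, an $m\times n$ nonzero-entry matrix with orthonormal rows. I expect this to be the crux, but it dissolves once one notices that deleting $n-m$ rows from an $n\times n$ orthogonal matrix with no zero entries leaves a matrix with orthonormal rows and still no zero entries, so that $BB^T=I_m$. One should double-check the small cases $n=1,2$, where the stated example notes that the nonzero orthogonal matrix must be supplied directly rather than via $I-\tfrac{2}{n}J$.
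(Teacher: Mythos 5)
Your proof is correct and follows essentially the same route as the paper: the $m=n$ case via Lemma~\ref{lem:bipartite2} applied to an all-nonzero orthogonal matrix, and the lower bound $q(K_{m,n})\geq 3$ for $m<n$ via Proposition~\ref{m=n}. The only divergence is the upper bound when $m<n$: the paper simply observes that the adjacency matrix of $K_{m,n}$ has the three distinct eigenvalues $\pm\sqrt{mn}$ and $0$, whereas you build $B\in\mathcal{B}(K_{m,n})$ with orthonormal rows by truncating an all-nonzero orthogonal matrix and invoke Theorem~\ref{bipartite}; your construction is valid (and a nice illustration of that theorem), just slightly more work than necessary.
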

\begin{proof}
If $m=n$, it is enough to normalize the real orthogonal matrix in
the example proceeding this Corollary and use it in Lemma~\ref{lem:bipartite2}. If $m<n$,
then according to Proposition~\ref{m=n}, we have $q(G)\geq 3$. On the
other hand, the adjacency matrix of $K_{m,n}$ has 3 distinct
eigenvalues. This completes the proof. \end{proof}

Next, we consider a group of bipartite graphs for which the lower bound
given in Theorem~\ref{thm:uniqueminpath} is tight. This
family is closely related to the ``tadpole graphs'' discussed in
\cite{F} and are of interest since they are {\em parallel paths}
(these graphs are discussed in Section~\ref{sec:n-1}). The exact value
of the maximum multiplicity of parallel paths is known to be $2$ (see \cite{JLS}).

Define $S_{m,n}$ to be the graph consisting of a $4$-cycle on vertices $v_1,u_1,v_2,u_2$ and edges $u_1v_2, v_2u_2, u_2v_1, v_1u_1$, together with a path $P_{m+1}$ starting at vertex $v_1$ and a path $P_{n+1}$ starting at $v_2$, where $P_{m+1}$ and $P_{n+1}$ are disjoint from each other and they intersect the $4$-cycle only on $v_1$ and $v_2$, respectively. Label the vertices on the paths $P_{m+1}$ and $P_{n+1}$ by $u_i$ and $v_i$, alternating the label $u$ and $v$ so that the graph can be considered as a bipartite graph with parts consisting of vertex sets $\{u_i\}$ and $\{v_i\}$. The graph $S_{m,n}$ has $m+n+4$ vertices, and the graph $S_{4,4}$ is given in Figure~\ref{fig:dart}.

\begin{figure}[htbp]
\begin{center}
\begin{tikzpicture}
\draw [fill] 
(-3,0)  circle (2pt) node[above]{$v_4$}
-- (-2.5,0)  circle (2pt) node[above]{$u_4$}
-- (-2,0)  circle (2pt) node[above]{$v_3$}
-- (-1.5,0)  circle (2pt) node[above]{$u_3$}
-- (-1,0)  circle (2pt) node[above]{$v_1$}
-- (-.5,.5)  circle (2pt) node[above]{$u_1$}
-- (0,0)  circle (2pt) node[above]{$v_2$}
-- (.5,0)  circle (2pt) node[above]{$u_5$}
-- (1,0)  circle (2pt) node[above]{$v_5$}
-- (1.5,0)  circle (2pt) node[above]{$u_6$}
-- (2,0)  circle (2pt) node[above]{$v_6$};
\draw [fill] (-.5,-.5)  circle (2pt) node[below]{$u_2$};
\draw (-1,0)--(-.5,-.5)--(0,0);

\end{tikzpicture}
\caption{The graph $S_{4,4}$ and $q(S_{4,4})=6$.}
\label{fig:dart}
\end{center}
\end{figure}
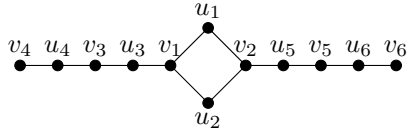

\begin{lemma}
If $m$ and $n$ have the same parity, then $$q(S_{m,n}) = \max\{m,n\}+2.$$
\end{lemma}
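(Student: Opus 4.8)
The plan is to establish the two inequalities $q(S_{m,n})\ge \max\{m,n\}+2$ and $q(S_{m,n})\le \max\{m,n\}+2$ separately; assume without loss of generality that $m\le n$, so that $\max\{m,n\}=n$. The lower bound I would read off directly from Theorem~\ref{thm:uniqueminpath}. Let $w$ be the endpoint of the path $P_{n+1}$ attached at $v_2$, so $w$ is at distance $n$ from $v_2$. Any walk leaving $w$ must first run back along $P_{n+1}$ to $v_2$, which is forced and has length $n$, and from $v_2$ the single edge $v_2u_1$ reaches $u_1$ while every other route to $u_1$ (for instance through $u_2$ and $v_1$) has length at least $3$. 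Hence the shortest $w$--$u_1$ path has length $n+1$ and is unique, so Theorem~\ref{thm:uniqueminpath} yields $q(S_{m,n})\ge n+2$. This half is routine and does not use the parity hypothesis.

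For the upper bound I would work with the bipartition of $S_{m,n}$ into its $u$- and $v$-classes. Counting labels along the two arms, the classes have sizes $2+\lceil m/2\rceil+\lceil n/2\rceil$ and $2+\lfloor m/2\rfloor+\lfloor n/2\rfloor$, so they differ by exactly the number of odd values among $m,n$; the same-parity hypothesis is precisely what makes this difference \emph{even} (the classes are equal when $m,n$ are both even and differ by $2$ when both odd). I would then look for a matrix in $S(S_{m,n})$ of the bipartite form
\[
A=\left[\begin{array}{cc}0 & B\\ B^{T} & 0\end{array}\right],\qquad B\in\mathcal{B}(S_{m,n}),
\]
and invoke Theorem~\ref{bipartite}, or Corollary~\ref{cor:bipartite2q} in the equal-class case. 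Since the eigenvalues of $A$ are $\pm\sqrt{\lambda}$ for $\lambda\in\sigma(BB^{T})$, bounding $q(A)$ reduces to bounding $q(BB^{T})$, and the even difference of class sizes is exactly what makes the parity of the resulting estimate $2q(BB^{T})$ (respectively $2q(BB^{T})+1$) agree with that of $n+2$. Concretely, it suffices to produce $B$ with $q(BB^{T})=\lfloor n/2\rfloor+1$, that is, with each nonzero singular value of $B$ repeated; comparing the $n+2$ distinct eigenvalues against the $m+n+4$ vertices shows the target profile is $m+2$ eigenvalues of multiplicity two together with $n-m$ simple ones.

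The substance of the proof is the explicit construction of such a $B$, and this is where I expect the real difficulty. The tempting shortcut of exploiting the arm-swapping automorphism (when $m=n$) in fact \emph{fails}: choosing the weights symmetrically across the two arms block-diagonalizes $A$ into two weighted paths, one a principal submatrix of the other, whose spectra strictly interlace and are therefore disjoint, giving the \emph{maximum} number of distinct eigenvalues rather than the minimum. The weights on the two arms must instead be chosen asymmetrically and tuned so that the eigenvalues reflected off each arm at the $4$-cycle coincide, thereby forcing the double multiplicities. I would attempt this either by an explicit (most naturally trigonometric) assignment of edge weights matching the two arms' reflection behaviour, or inductively by proving the balanced case $m=n$ first and then showing that lengthening the long arm by two vertices contributes exactly two new simple eigenvalues, i.e.\ $q(S_{m,n+2})\le q(S_{m,n})+2$. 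In either route the crux is to control pattern and spectrum simultaneously: one must verify that the constructed $B$ has no accidental zero entries (so that $B\in\mathcal{B}(S_{m,n})$) while $BB^{T}$ has exactly $\lfloor n/2\rfloor+1$ distinct eigenvalues with its nonzero ones doubled. This joint control is the main obstacle.
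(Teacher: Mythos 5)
Your overall strategy coincides with the paper's: the lower bound via the unique shortest path from the pendant vertex of the longer arm to $u_1$ together with Theorem~\ref{thm:uniqueminpath} is exactly the argument given there, and the upper bound is likewise reduced, as in the paper, to exhibiting a matrix $B\in\mathcal{B}(S_{m,n})$ whose Gram matrix $BB^T$ has $(\max\{m,n\}+2)/2$ distinct eigenvalues, followed by an appeal to Corollary~\ref{cor:bipartite2q} (or Theorem~\ref{bipartite} when the parts are unequal). Your bookkeeping of the part sizes and of the required eigenvalue multiplicities is correct.

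The gap is that you stop at precisely the step that constitutes the proof: you never produce $B$, and you explicitly defer the ``joint control'' of pattern and spectrum as ``the main obstacle.'' The paper closes this with one concrete idea your proposal is missing: fix the four entries of $B$ on the $4$-cycle to be $b_{u_2v_1}=-1$ and $b_{u_2v_2}=b_{u_1v_1}=b_{u_1v_2}=1$. This sign choice makes the two rows of $B$ indexed by $u_1$ and $u_2$ orthogonal across the cycle, so that (after ordering the vertices arm by arm) $BB^T$ becomes block diagonal with two unreduced tridiagonal blocks $X$ and $Y$ of orders $(\max\{m,n\}+2)/2$ and $(\min\{m,n\}+2)/2$ in the even case; the remaining arm weights are then chosen via the inverse eigenvalue problem for Jacobi matrices \cite{ld} so that $X$ has distinct positive eigenvalues and $\sigma(Y)\subseteq\sigma(X)$. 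This is exactly the ``asymmetric tuning'' you anticipate but do not carry out, and it is the decoupling of $BB^T$ into two independently prescribable tridiagonal spectra that makes the tuning possible at all. Your side remarks are sound --- the arm-symmetric choice for $m=n$ does fail by strict interlacing of Jacobi spectra, and the parity hypothesis is indeed what makes the part sizes compatible --- but as written the upper bound is an announced plan rather than a proof.
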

\begin{proof} We assume that $m$ and $n$ are both even, the case when they are both
odd is similar. We use the above labeling for $S_{m,n}$, and assume that $m \geq n$. Since there is a unique shortest path
with $m+2$ vertices from the pendant vertex on $P_{m+1}$ to $u_1$, 
by Theorem \ref{thm:uniqueminpath}
we know that $q(S_{m,n}) \geq m+2$. Define an $(m+n+4)/2 \times (m+n+4)/2$ matrix $B=[b_{ij}] \in
\mathcal{B}(S_{m,n})$ with the rows labeled by the vertices $u_i$ and
the columns labeled by the vertices $v_i$. Let $b_{u_2
v_1}=-1$ and  $b_{u_2v_2}=b_{u_1v_1}=b_{u_1v_2}=1$. 
Then, with the proper ordering of the vertices,
$BB^T$ has the form $\left[
\begin{matrix}
X & 0 \\
0 & Y \\
\end{matrix}
\right]$
where $X$ is an $\frac{m+2}{2}\times \frac{m+2}{2}$ tridiagonal matrix
and $Y$ is an $\frac{n+2}{2}\times \frac{n+2}{2}$ tridiagonal
matrix. Using the inverse eigenvalue problem for tridiagonal matrices
\cite{ld} it is possible to find entries for $B$ such that the
eigenvalues for $X$ are distinct and the eigenvalues for $Y$ are a
subset of the eigenvalues of $X$. Thus $q(BB^T) = \frac{m+2}{2}$ and by
Corollary~\ref{cor:bipartite2q}, $q(S_{m,n}) \leq m+2$.  \end{proof}

Since $q(P_n)=n$ and $q(C_n)\approx n/2$, we know that addition of an edge can dramatically decrease the minimum number of
distinct eigenvalues. Here we show that the addition of an edge to a graph can also dramatically increase the minimum number of
distinct eigenvalues. To see this consider the graph $G$ obtained by
adding an edge between vertices $u_1$ and $u_3$ in the graph
$S_{m,m}$ (see Figure~\ref{fig:dart2}). We know that $q(S_{m,m}) =
m+2$, but the new graph $G$ has a unique shortest path that contains
$2m+2$ vertices from a pendant vertex to another pendant vertex. Thus, by 
Theorem \ref{thm:uniqueminpath},
 $q(G)\geq 2m+2$, and
\begin{figure}[htbp]
\begin{center}
\begin{tikzpicture}
\draw [fill] 
(-3,0)  circle (2pt) node[above]{$v_4$}
-- (-2.5,0)  circle (2pt) node[above]{$u_4$}
-- (-2,0)  circle (2pt) node[above]{$v_3$}
-- (-1.5,0)  circle (2pt) node[above]{$u_3$}
-- (-1,0)  circle (2pt) node[below]{$v_1$}
-- (-.5,.5)  circle (2pt) node[above]{$u_1$}
-- (0,0)  circle (2pt) node[above]{$v_2$}
-- (.5,0)  circle (2pt) node[above]{$u_5$}
-- (1,0)  circle (2pt) node[above]{$v_5$}
-- (1.5,0)  circle (2pt) node[above]{$u_6$}
-- (2,0)  circle (2pt) node[above]{$v_6$};
\draw [fill] (-.5,-.5)  circle (2pt) node[below]{$u_2$};
\draw (-1,0)--(-.5,-.5)--(0,0);
\draw (-.5,.5)--(-1.5,0);
\end{tikzpicture}
\caption{The graph $S_{4,4}$ with a single edge added. The parameter $q$ for this graph is at least $10$.}
\label{fig:dart2}
\end{center}
\end{figure}
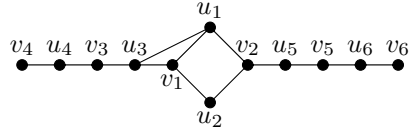
we may conclude that there exist graphs $G$ and an edge $e$ such that
the gap between $q(G)$  and $q(G-e)$ can grow
  arbitrarily large as a function of the number of vertices.

Similarly, if we consider the graph obtained from $S_{m,m}$ by adding a new vertex $w$ and edges 
$\{w,u_1\}$ and $\{w, u_3\}$, then this new graph has a unique shortest path between the pendant vertices that contains $2m+3$ vertices. Hence there exists a family of graphs
$G$ with a vertex $v$ of degree $2$ such that
 the gap between $q(G)$ and $q(G\backslash v)$ can grow arbitrarily large as a function of the number of vertices.


We now switch gears and consider a graph product and a graph operation
in an effort to compute $q$ for more families of graphs. The product that
we consider is the {\em Cartesian product}; if $G$ and $H$ are graphs then
$G \square H$ is the graph on the vertex set $V(G) \times V(H)$ with
$\{g_1,h_1\}$ and $\{g_2,h_2\}$ adjacent if and only if either $g_1=g_2$
and $h_1$ and $h_2$ are adjacent in $H$ or $g_1$ and $g_2$ are
adjacent in $G$ and $h_1 =h_2$.

\begin{theorem}\label{thm:cartesian} 
Let $G$ be a graph on $n$ vertices, then $q(G \square K_2)\leq 2q(G)-2$.
\end{theorem}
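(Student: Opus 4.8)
The plan is to realize matrices in $S(G\square K_2)$ as block matrices built from a single well-chosen $A\in S(G)$, using the template
\[
M=\left[\begin{array}{cc} A & cI \\ cI & -A \end{array}\right],\qquad c\neq 0 .
\]
Both diagonal blocks, $A$ and $-A$, lie in $S(G)$ (negation preserves the off-diagonal zero/nonzero pattern), while the off-diagonal block $cI$ records exactly the perfect matching joining the two copies of $G$, which is precisely the set of remaining edges of $G\square K_2$. Hence $M\in S(G\square K_2)$ for every such $A$ and every $c\neq 0$, and it suffices to arrange that $M$ has few distinct eigenvalues.

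The crucial feature of this template is that the sign in the lower block cancels the cross terms upon squaring:
\[
M^2=\left[\begin{array}{cc} A^2+c^2I & 0 \\ 0 & A^2+c^2I \end{array}\right].
\]
Thus $\sigma(M^2)=\{\mu^2+c^2 : \mu\in\sigma(A)\}$, and since $M$ is real symmetric, every eigenvalue of $M$ equals $\pm\sqrt{v}$ for some $v\in\sigma(M^2)$. Consequently $q(M)\le 2\,|\sigma(M^2)|=2\,\bigl|\{\mu^2 : \mu\in\sigma(A)\}\bigr|$, so the whole task reduces to producing some $A\in S(G)$ with $q(A)=q(G)$ whose eigenvalues yield at most $q(G)-1$ distinct \emph{squares}.

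To secure that last reduction I would start from any $A_0\in S(G)$ attaining $q(A_0)=q(G)=:k$, with distinct eigenvalues $\mu_1<\cdots<\mu_k$, and then translate the spectrum so that the two extreme eigenvalues become negatives of one another. Concretely, replacing $A_0$ by $A=A_0-\tfrac{\mu_1+\mu_k}{2}I$ keeps $A\in S(G)$ with exactly $k$ distinct eigenvalues (a diagonal shift, the sort of normalization permitted by Lemma~\ref{lem:01}) while forcing $\mu_1^2=\mu_k^2$. The set $\{\mu_i^2\}$ then has at most $k-1$ distinct values, so $|\sigma(M^2)|\le k-1$ and therefore $q(G\square K_2)\le q(M)\le 2(k-1)=2q(G)-2$.

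The only real content is engineering this single coincidence among the squared eigenvalues; everything else is bookkeeping. The design choice that makes it work is the $-A$ in the lower-right block: it both keeps $M$ in $S(G\square K_2)$ and collapses $M^2$ onto $A^2$, so that merging one pair $\mu_1,\mu_k$ into equal squares removes a value from $\{\pm\sqrt{v}\}$ and thereby buys a reduction of \emph{two} in $q(M)$. The main pitfall to guard against is expecting a clever choice of the scalar $c$ to do the work (a plain shift $\left[\begin{smallmatrix} A & cI \\ cI & A \end{smallmatrix}\right]$ only forces coincidences among $\sigma(A)\pm c$ and, for generic spectra, saves at most one eigenvalue); the sign flip is what guarantees the saving of two regardless of the spacing of $\sigma(A)$. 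I would finally check two boundary points: that $c\neq 0$ is all that is required of the matching block, since $\mu_i^2+c^2>0$ keeps every square root real; and that the argument needs $k\ge 2$, that is, $G$ has an edge. For the edgeless graph $q(G)=1$ and the stated bound is vacuous, so the hypothesis $q(G)\ge 2$ is implicit.
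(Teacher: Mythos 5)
Your proposal is correct and follows essentially the same route as the paper: the same block matrix $\left[\begin{smallmatrix} A & cI \\ cI & -A \end{smallmatrix}\right]$, the same cancellation upon squaring, and the same idea of forcing two eigenvalues of $A$ to have equal squares (the paper normalizes two eigenvalues to $\pm 1$ via Lemma~\ref{lem:01} and picks $\alpha^2+\beta^2=1$, while you shift the spectrum so the extreme eigenvalues are negatives of each other --- an equivalent device). Your closing remark about the edgeless case is a fair observation about the theorem's implicit hypothesis, which the paper's proof also assumes tacitly.
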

\begin{proof} Let $A \in S(G)$ with $q(A) = q(G)=\ell$, and assume $\sigma(A)=\{\lambda_1, \lambda_2, \dots,
\lambda_{\ell}\}$, where $\lambda_1=1, \lambda_2=-1$.  Let $\alpha,\beta$ be nonzero scalars, and consider the matrix
\[
B = \left[ \begin{array}{cc} \alpha A & \beta I \\ \beta I & -\alpha A \end{array}\right]\in S(G \square K_2).
\]
Then,
\[
B^2 = \left[ \begin{array}{cc} \alpha^2 A^2 + \beta^2I & 0 \\ 0 & \alpha^2 A^2+ \beta^2 I \end{array}\right]
\]
and the eigenvalues of $B^2$ are of the form $\alpha^2\lambda_i^2 + \beta^2$, for $i=1,\ldots,\ell$.
If we choose $\alpha^2+\beta^2 = 1$, then two eigenvalues of $B^2$ are equal to $1$.
Since $\pm \lambda$ is an eigenvalue of $B$ whenever $\lambda^2$ is an eigenvalue of $B^2$, this implies that
$q(B) \leq 2(\ell-1) = 2q(G) - 2$.\end{proof}

The following is implied by Lemma~\ref{lem:q=1} and Theorem~\ref{thm:cartesian}. 

\begin{corollary}\label{thm:hypercube}
If $q(G) =2$, then $q(G \square K_2) = 2$.
\end{corollary}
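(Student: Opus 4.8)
The plan is to obtain both the upper and lower bounds on $q(G \square K_2)$ and observe that they coincide at $2$. For the upper bound I would invoke Theorem~\ref{thm:cartesian} directly: since $q(G) = 2$, that result gives
\[
q(G \square K_2) \leq 2q(G) - 2 = 2(2) - 2 = 2.
\]
No additional construction is needed here, as the matrix $B$ built in the proof of Theorem~\ref{thm:cartesian} already furnishes a member of $S(G \square K_2)$ with at most two distinct eigenvalues when $q(G)=2$.

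For the matching lower bound I would argue that $G \square K_2$ has at least one edge, and then apply Lemma~\ref{lem:q=1} in its contrapositive form. The key intermediate step is the observation that $q(G) = 2$ forces $G$ to be non-empty: if $G$ had no edges, Lemma~\ref{lem:q=1} would give $q(G) = 1$, contradicting $q(G) = 2$. Since $G$ contains an edge, the Cartesian product $G \square K_2$ inherits that edge within each of its two copies of $G$, so $G \square K_2$ is itself non-empty. Lemma~\ref{lem:q=1} then yields $q(G \square K_2) \geq 2$.

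Combining the two bounds gives $2 \leq q(G \square K_2) \leq 2$, hence $q(G \square K_2) = 2$, as claimed. I expect no genuine obstacle in this argument; the only point requiring a moment's care is justifying the lower bound, i.e.\ confirming that $G \square K_2$ is non-empty so that Lemma~\ref{lem:q=1} applies. Everything else is an immediate specialization of Theorem~\ref{thm:cartesian} to the value $q(G) = 2$.
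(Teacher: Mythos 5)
Your proposal is correct and is essentially the paper's own argument: the paper derives this corollary directly from Theorem~\ref{thm:cartesian} (giving $q(G \square K_2) \leq 2q(G)-2 = 2$) together with Lemma~\ref{lem:q=1} (giving the lower bound of $2$ since $G \square K_2$ is non-empty). Your extra care in confirming non-emptiness is fine but adds nothing beyond what the paper implicitly assumes.
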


Observe that Corollary \ref{thm:hypercube} verifies that the bound in Theorem~\ref{thm:cartesian} can be tight.

\begin{corollary}\label{ex:hypercube}
If $n\geq 1$ is an integer, then the hypercube, $Q_n$, satisfies $q(Q_n) = 2$. 
\end{corollary}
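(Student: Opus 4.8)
The plan is to proceed by induction on $n$, exploiting the recursive description of the hypercube as an iterated Cartesian product with $K_2$. First I would record the standard fact that $Q_1 = K_2$ and that, for every $n \geq 2$, the hypercube decomposes as $Q_n = Q_{n-1} \,\square\, K_2$. This identity follows directly from the definition of $Q_n$ as the graph whose vertices are binary strings of length $n$ with edges joining strings differing in exactly one coordinate: fixing the last coordinate yields two copies of $Q_{n-1}$, and the edges flipping that last coordinate are precisely the $K_2$-edges of the Cartesian product. Once this structural observation is in place, the rest is a mechanical induction.

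For the base case $n = 1$, I would simply invoke the earlier lemma giving $q(K_n) = 2$ for $n \geq 2$, so that $q(Q_1) = q(K_2) = 2$. For the inductive step, assume $q(Q_{n-1}) = 2$. Writing $Q_n = Q_{n-1} \,\square\, K_2$ and applying Corollary~\ref{thm:hypercube} (which states that $q(G) = 2$ implies $q(G \,\square\, K_2) = 2$) immediately yields $q(Q_n) = 2$. This closes the induction and establishes $q(Q_n) = 2$ for all $n \geq 1$.

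I do not expect any genuine obstacle here, since Corollary~\ref{thm:hypercube} performs all the analytic work; the only thing one must be careful to state cleanly is the recursive identity $Q_n = Q_{n-1} \,\square\, K_2$, as the whole argument hinges on recognizing the hypercube as a repeated Cartesian product with $K_2$. It is worth remarking that this corollary also supplies a family of examples showing that $\diam(G) + 1$ need not be a lower bound for $q(G)$: the hypercube $Q_n$ has diameter $n$, yet $q(Q_n) = 2$, which is precisely the counterexample promised earlier in the discussion following Theorem~\ref{carlos}.
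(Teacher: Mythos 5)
Your proposal is correct and is essentially identical to the paper's own argument: both use the recursion $Q_n = Q_{n-1}\,\square\, K_2$ with base case $Q_1 = K_2$, then apply Corollary~\ref{thm:hypercube} inductively. No issues.
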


\begin{proof}
Recall that $Q_n$ can be defined recursively as $Q_n = Q_{n-1} \square K_2$, with $Q_1=K_2$.
Since $q(K_2)=2$, the results follows by application of Corollary \ref{thm:hypercube}.
\end{proof}

Note that the diameter of $Q_n$ is $n$ while $q(Q_n)$ is always $2$,
so for a graph that is not a tree the difference between the diameter
and the minimum number of distinct eigenvalues can be arbitrarily large, as 
a function of the number of vertices.

Next we consider an operation on a graph.
Let $G$ be a graph, then the {\em corona of $G$} is the graph
formed by joining a pendant vertex to each vertex of $G$.

\begin{lemma}\label{lem:1corona}
Let $G$ be a graph and let $G'$ be the corona of $G$, then 
$q(G') \leq 2q(G)$.
\end{lemma}
\begin{proof} Consider the matrix
$
B= \left[ \begin{array}{cc}
A & I \\ I & 0
\end{array} \right]\in S(G')
$ when $A\in S(G)$. Assume that $\lambda$ is an eigenvalue of $B$ with the eigenvector $
\left[ \begin{array}{c}
x \\ y
\end{array} \right]$. Then, 
$Ax + y = \lambda x$ and $x = \lambda y.$
Hence, $\lambda\neq 0$, and $Ay = \frac{\lambda^2-1}{\lambda} y$. Therefore, $\mu = \frac{\lambda^2 -1}{\lambda}$ is an eigenvalue of $A$.
This implies that for each eigenvalue $\mu$ of $A$ there are two real eigenvalues for $\lambda=\frac{\mu\pm\sqrt{\mu^2+4}}{2}$, this completes the proof.\end{proof}

\section{Connected graphs with many distinct eigenvalues}
\label{sec:n-1}

In this section we address the question of ``which connected graphs have the minimum
number of distinct eigenvalues near the number of vertices of the
graph".  In Proposition~\ref{paths}, we observed that $q(G) = |V(G)|$ if and only
if $G$ is a path. In this section we study the connected graphs $G$ with the property that
 $q(G) = |V(G)|-1$.

To begin we apply Theorem~\ref{thm:uniqueminpath} to derive two families of graphs
for which $q$ is one less than the number of vertices. 

\begin{proposition} \label{q:n-1-triangle}
  Let $G$ be the graph with vertices $v_1,v_2,\dots ,v_n$ and edge set
   $E = \{\{v_1,v_2\} ,\{v_2, v_3\},\ldots ,\{v_{n-1},v_n\}, \{v_i, v_{i+2}\}\}$, where
   $i$ is fixed and satisfies $1 \leq i \leq n-2$. Then, $q(G) = |V(G)|-1$.
\end{proposition}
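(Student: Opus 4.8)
The plan is to prove the two matching bounds $q(G)\ge n-1$ and $q(G)\le n-1$, following exactly the strategy used in Corollary~\ref{cor:anynk}: a lower bound from a unique shortest path together with an upper bound from a clique covering.

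For the lower bound, I would exhibit a unique shortest path of length $n-2$. The chord $\{v_i,v_{i+2}\}$ creates a triangle on $v_i,v_{i+1},v_{i+2}$, and its only effect on distances is to allow a path to skip the vertex $v_{i+1}$. Hence the shortest $v_1$--$v_n$ path is
\[
v_1,v_2,\ldots,v_i,v_{i+2},v_{i+3},\ldots,v_n,
\]
of length $n-2$. I would argue uniqueness as follows: the vertex $v_{i+1}$ has degree two (its only neighbours are $v_i$ and $v_{i+2}$), so any $v_1$--$v_n$ path that does not use the chord must pass through $v_{i+1}$ and therefore has length at least $n-1$, whereas the chord is used by exactly one path of length $n-2$. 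Applying Theorem~\ref{thm:uniqueminpath} with $d=n-2$ then gives $q(G)\ge n-1$.

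For the upper bound, I would compute the clique covering number. The triangle $\{v_i,v_{i+1},v_{i+2}\}$ is a single clique covering its three edges, while each of the remaining $n-3$ edges of $G$ lies in no larger clique and so must be covered by its own copy of $K_2$. Thus $\cc(G)=n-2$, and Corollary~\ref{cor:cliquecovering} yields $q(G)\le \cc(G)+1=n-1$. Combining the two bounds gives $q(G)=n-1$.

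I do not expect any serious obstacle here. The only points requiring care are verifying the uniqueness of the shortest path (which rests entirely on $v_{i+1}$ having degree two) and confirming that no edge outside the triangle participates in a larger clique, so that the clique cover cannot be shrunk below $n-2$. Small cases, such as $n=3$ (where $G=K_3$, $\cc(G)=1$, and the unique shortest $v_1$--$v_3$ path is the chord of length $1$), are consistent with both bounds and require no separate treatment.
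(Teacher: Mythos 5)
Your proof is correct and follows the same route the paper intends: the lower bound $q(G)\ge n-1$ from the unique shortest $v_1$--$v_n$ path of length $n-2$ via Theorem~\ref{thm:uniqueminpath}, and the upper bound from the clique cover consisting of the triangle $\{v_i,v_{i+1},v_{i+2}\}$ plus the remaining $n-3$ edges, giving $q(G)\le \cc(G)+1\le n-1$ by Corollary~\ref{cor:cliquecovering} (note only $\cc(G)\le n-2$ is needed, not equality). This is exactly the pattern of Corollary~\ref{cor:anynk}, and your uniqueness argument via the degree-two vertex $v_{i+1}$ and the bridge edges outside the triangle is sound.
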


\begin{proposition}\label{Cor:treeswithhighq}
 Let $G$ be the graph with vertices $v_1,v_2,\dots ,v_n$
and edge set
$E = \{\{v_1,v_2\} ,\{v_2, v_3\},\ldots ,\{v_{n-2},v_{n-1}\}, \{v_i, v_{n}\}\}$, where
$i$ is fixed and satisfies $2 \leq i \leq n-2$. 
Then, $q(G) = |V(G)|-1$.
\end{proposition}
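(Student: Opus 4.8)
The plan is to combine the upper bound coming from Proposition~\ref{paths} with the lower bound from Theorem~\ref{thm:uniqueminpath}, exactly as in the proof of Proposition~\ref{q:n-1-triangle}. First I would observe that $G$ is a tree: the edges $\{v_1,v_2\},\dots,\{v_{n-2},v_{n-1}\}$ form a path on $v_1,\dots,v_{n-1}$, and the single extra edge $\{v_i,v_n\}$ attaches the new vertex $v_n$ as a pendant at $v_i$, creating no cycle. Because $2\le i\le n-2$, the vertex $v_i$ is an interior vertex of this path and therefore has degree $3$ (its neighbours are $v_{i-1}$, $v_{i+1}$, and $v_n$); in particular $G$ is a tree that is not a path.

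For the upper bound, since $G$ is not a path, Proposition~\ref{paths} gives $q(G)\neq |V(G)|$, and hence $q(G)\le |V(G)|-1=n-1$.

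For the lower bound I would apply Theorem~\ref{thm:uniqueminpath} to the pair $u=v_1$, $v=v_{n-1}$. In a tree there is a unique path between any two vertices, so the path $v_1,v_2,\dots,v_{n-1}$ is the unique path joining $v_1$ and $v_{n-1}$; it has $n-2$ edges, so these two vertices are at distance $d=n-2$ and the path of that length is unique. Theorem~\ref{thm:uniqueminpath} then yields $q(G)\ge d+1=n-1$. Combining the two bounds gives $q(G)=n-1$.

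The argument is essentially routine; the only points needing care are the verification that $G$ is genuinely not a path (which is where the hypothesis $2\le i\le n-2$ is used---the extreme values $i=1$ or $i=n-1$ would turn $v_n$ into a second endpoint of the path and make $G$ a copy of $P_n$) and the confirmation that $v_1$ and $v_{n-1}$ realise the distance I am exploiting. On the latter point I would note that any path through $v_n$ has length at most $\max\{i,\,n-i\}\le n-2$, so no longer unique path is available; but since the distance $n-2$ between $v_1$ and $v_{n-1}$ already matches the upper bound, this check is only needed to confirm that Theorem~\ref{thm:uniqueminpath} cannot be pushed past $n-1$, not for the equality itself. There is no substantive obstacle.
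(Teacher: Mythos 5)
Your proof is correct and follows exactly the route the paper intends (the paper states this proposition without a written proof, noting only that it follows from Theorem~\ref{thm:uniqueminpath}): the unique path $v_1,\dots,v_{n-1}$ of length $n-2$ gives $q(G)\ge n-1$, and Proposition~\ref{paths} gives $q(G)\le n-1$ since $v_i$ has degree $3$ so $G$ is not a path. Your observation that the hypothesis $2\le i\le n-2$ is precisely what prevents $G$ from degenerating into $P_n$ is the right point of care, and the final remark about distances through $v_n$ is, as you say, not needed for the equality.
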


Using Proposition~\ref{mrandq} we may deduce that any graph $G$ for which $q(G) = |V(G)|-1$ implies $\M(G) = 2$. However, even more can be said about such graphs.

\begin{theorem}\label{Cor:graphsswithhighq}
If $G$ is a graph that satisfies $q(G)=|V(G)|-1$, then $G$ has the following properties:
\begin{enumerate}
\item $\M(G) = 2$.
\item If $A$ is in $S(G)$ and $A$ has a multiple eigenvalue, then $A$ has exactly one eigenvalue of multiplicity two,
and all remaining eigenvalues are simple. 
\end{enumerate}
\end{theorem}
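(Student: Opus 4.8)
The plan is to treat the two conclusions separately, exploiting the interplay between $q(G)$, the minimum rank $\mr(G)$, and the maximum multiplicity $\M(G)$. Write $n = |V(G)|$ and recall the standard identity $\mr(G) + \M(G) = n$: for every $A \in S(G)$ the rank and nullity of $A$ sum to $n$, so maximizing nullity over $S(G)$ is the same as minimizing rank.

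For the bound $\M(G) \le 2$ in part (1), I would apply Proposition~\ref{mrandq}: from $q(G) = n - 1$ we get $n - 1 \le \mr(G) + 1$, hence $\mr(G) \ge n - 2$ and $\M(G) = n - \mr(G) \le 2$. For the reverse inequality $\M(G) \ge 2$ I would argue by contradiction. Since $n \ge 2$ we always have $\M(G) \ge 1$, so suppose $\M(G) = 1$. Then no matrix in $S(G)$ has nullity exceeding one; shifting by a scalar multiple of $I$ (which keeps us in $S(G)$) translates this into the statement that no eigenvalue of any $A \in S(G)$ has multiplicity exceeding one. Thus every $A \in S(G)$ has $n$ distinct eigenvalues, so $q(G) = n$, and Proposition~\ref{paths} then forces $G$ to be a path with $q(G) = n$, contradicting $q(G) = n - 1$. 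Hence $\M(G) = 2$.

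For part (2) the crucial point is that $q(A) \ge q(G) = n - 1$ for every $A \in S(G)$, simply because $q(G)$ is defined as a minimum over $S(G)$. Now let $A \in S(G)$ have a multiple eigenvalue; then $A$ has strictly fewer than $n$ distinct eigenvalues, so $q(A) \le n - 1$, and combining the two bounds gives $q(A) = n - 1$ exactly. Listing the distinct eigenvalues of $A$ with multiplicities $m_1, \ldots, m_{n-1}$, each at least $1$ and summing to $n$, we obtain $\sum_{i=1}^{n-1}(m_i - 1) = 1$; since each summand is a nonnegative integer, exactly one $m_i$ equals $2$ and the rest equal $1$, which is exactly the claimed spectral shape.

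Most of this is bookkeeping: the two counting manipulations are immediate once the inequalities are in place. The one step carrying real content, and where I would be most careful, is the lower bound $\M(G) \ge 2$, where the work is in correctly passing from ``maximum nullity equals one'' to ``every matrix in $S(G)$ has simple spectrum'' via the diagonal shift, and then invoking the path characterization of Proposition~\ref{paths} to reach the contradiction.
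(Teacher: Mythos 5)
Your proof is correct and follows essentially the route the paper intends: the paper states this theorem without a written proof, but the sentence preceding it points to Proposition~\ref{mrandq} for deducing $\M(G)=2$, which is exactly your argument for the upper bound, and your counting arguments for $\M(G)\geq 2$ and for part (2) are the standard way to fill in the rest. (One minor remark: in your lower-bound step, once you conclude $q(G)=n$ you already contradict $q(G)=n-1$ directly, so the appeal to Proposition~\ref{paths} is superfluous, though harmless.)
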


The next result verifies that the graphs in Proposition~\ref{Cor:treeswithhighq} are the only trees with $q(G)=|V(G)|-1$.

\begin{lemma} \label{treeqn-1} Suppose $T$ is a tree. If $q(T) = |V(T)|-1$, then $T$ consists of a path
$P_{|V(T)|-1}$, along with a pendant vertex adjacent to a non-pendant vertex in this path.
\end{lemma}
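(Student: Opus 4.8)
The plan is to combine the structural constraint $\M(T)=2$ from Theorem~\ref{Cor:graphsswithhighq} with the classical identity $\M(T)=P(T)$ for trees (see \cite{FH}), where $P(T)$ denotes the path cover number, and then to isolate brooms by a multiplicity-realization argument. First, since $q(T)=|V(T)|-1<|V(T)|$, Proposition~\ref{paths} shows $T$ is not a path, so $T$ has at least one vertex of degree $\geq 3$. By Theorem~\ref{Cor:graphsswithhighq} we have $\M(T)=2$, whence $P(T)=2$. A tree with path cover number two is the union of two vertex-disjoint paths together with exactly one additional edge (the tree has $n-1$ edges while two covering paths on $n$ vertices use only $n-2$ of them). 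Thus $T$ is obtained by joining a vertex of one path to a vertex of the other; the only vertices of degree $\geq 3$ are the two endpoints of the joining edge, and $T$ is either a spider with exactly three legs (one endpoint of the joining edge lies at an end of its path) or a \emph{double-branch} tree with two adjacent vertices of degree $3$ (both endpoints interior).

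Next I would record the obstruction that singles out brooms. Let $\lambda$ be an eigenvalue of some $A\in S(T)$ with $m_A(\lambda)\geq 2$. By the Parter--Wiener theorem there is a vertex $v$ of degree $\geq 3$ with $m_A(\lambda)=m_{A[T-v]}(\lambda)-1$ such that $\lambda$ is an eigenvalue of the principal submatrix of $A$ on at least three of the branches of $T$ at $v$. Two observations drive the argument: if a branch at $v$ is a single pendant vertex $w$ (a leg of length $1$), then the corresponding submatrix is the scalar $a_{ww}$, so $\lambda=a_{ww}$ is forced; and since a path has distinct eigenvalues, each leg contributes $\lambda$ with multiplicity at most $1$, so realizing $m_A(\lambda)=2$ requires $\lambda$ to be an eigenvalue of all three branches at $v$.

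The heart of the proof is to show that a tree in the above list that is \emph{not} a broom admits an $A\in S(T)$ with two distinct eigenvalues of multiplicity $2$, whence $q(T)\leq n-2$, contradicting $q(T)=n-1$. For a three-leg spider whose legs all have length $\geq 2$, I would use the inverse eigenvalue problem for tridiagonal matrices~\cite{ld} to choose the Jacobi submatrix on each leg so that two fixed reals $\lambda\neq\mu$ both lie in its spectrum; assembling these with the center and nonzero connecting entries yields $A\in S(T)$ in which $\lambda$ and $\mu$ are each eigenvalues of all three legs, so by interlacing $m_A(\lambda),m_A(\mu)\geq 2$, and since $\M(T)=2$ both equal $2$. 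For a double-branch tree, the same idea applies, but the two multiplicities are sourced at the two different branch vertices $u$ and $v$: I would force $\lambda$ to be an eigenvalue of the submatrix on the $v$-side of $T-u$ and of both pendant branches at $u$, and $\mu$ to be an eigenvalue of the $u$-side of $T-v$ and of both pendant branches at $v$, which the free diagonal and off-diagonal entries permit. By contrast, a broom has a single branch vertex, so every double eigenvalue must be realized there; being forced through the length-$1$ leg to equal that pendant's diagonal entry, all double eigenvalues coincide, and at most one can occur.

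Finally, the surviving cases are exactly the brooms: a single branch vertex of degree $3$ with legs of lengths $1,b,c$ where $b,c\geq 1$. Re-expressed, $T$ is a path $P_{|V(T)|-1}$ (the two legs of lengths $b$ and $c$ together with the center) with a single pendant vertex attached at the center, which is an interior, hence non-pendant, vertex of that path. The converse, that these graphs satisfy $q=|V(T)|-1$, is Proposition~\ref{Cor:treeswithhighq}, completing the characterization. I expect the main obstacle to be the multiplicity bookkeeping in the two constructions: verifying that the prescribed values occur with multiplicity exactly $2$ and that no unintended coincidences push the distinct-eigenvalue count strictly below $n-2$ in a way that complicates the contradiction; the bound $\M(T)=2$ is precisely what prevents the multiplicities from overshooting.
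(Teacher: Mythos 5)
Your argument is correct, and it opens the same way as the paper's: from $q(T)=|V(T)|-1$ one gets $\M(T)=2$ via Theorem~\ref{Cor:graphsswithhighq}(1), hence a cover by two vertex-disjoint paths \cite{JD}, hence $T$ is two paths joined by a single edge and is (excluding the path case by Proposition~\ref{paths}) either a three-leg spider or a tree with two adjacent degree-three vertices. Where you genuinely diverge is in eliminating the non-brooms. The paper invokes Theorem~\ref{Cor:graphsswithhighq}(2) together with the Johnson--Saiago characterization \cite{JS} of trees for which a maximum-multiplicity eigenvalue forces all other eigenvalues to be simple; that citation does both jobs at once (no two degree-$\geq 3$ vertices adjacent, and at most two non-singleton components after deleting the branch vertex), yielding the broom directly. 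You instead construct, for each non-broom in the list, an explicit $A\in S(T)$ with two distinct eigenvalues of multiplicity two, so $q(T)\leq |V(T)|-2$: for the spider with all legs of length $\geq 2$ you prescribe $\lambda,\mu$ in every leg's Jacobi spectrum \cite{ld} and apply Cauchy interlacing at the centre; for the double-branch tree you source $\lambda$ at $u$ and $\mu$ at $v$, using the free diagonal entries $a_{uu},a_{vv}$ to place $\mu$ and $\lambda$ in the spectra of the bordered path submatrices. I checked that these two sets of constraints live on disjoint entries and that interlacing gives $m_A(\lambda),m_A(\mu)\geq 2$ as claimed, so the construction goes through (note you only need interlacing here, not the full Parter--Wiener machinery, and the paragraph on why brooms admit at most one double eigenvalue is not needed for this direction of the lemma -- it belongs to the converse, Proposition~\ref{Cor:treeswithhighq}). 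The trade-off is clear: the paper's proof is shorter but leans on an external structural theorem, while yours is self-contained modulo the Jacobi inverse eigenvalue problem, at the cost of carrying out the multiplicity bookkeeping explicitly; your route also has the small bonus of exhibiting concrete matrices witnessing $q(T)\leq |V(T)|-2$ on the excluded trees.
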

\begin{proof} Since $q(T)=|V(T)|-1$, it follows from Proposition~\ref{mrandq}
that $\mr(T) \geq |V(T)|-2$.  Using Theorem~\ref{Cor:graphsswithhighq} (1), $\M(T)=2$, and
hence the vertices of $T$ can be covered by two vertex-disjoint paths (see \cite{JD}).
Therefore, $T$
consists of two induced paths $P_1$ and $P_2$ that cover all of the
vertices of $T$ along with exactly one edge connecting $P_1$ and
$P_2$. Then $T$ has maximum degree equal to three and contains at most
two vertices of degree three. Using Theorem~\ref{Cor:graphsswithhighq} (2), if $q(T)=|V(T)|-1$, then any matrix $A\in S(T)$
realizing an eigenvalue of (maximum) multiplicity two, has all
other eigenvalue being simple. In \cite{JS}, all such trees have been
characterized, for all values of $\M(T)$. In particular, from Theorem
1 in \cite{JS}, we may conclude that the subgraph of $T$ induced by
the vertices of degree at least three must be empty.  Thus, $T$
has exactly one vertex of degree three. Furthermore, deletion of the
vertex of degree three yields at most two components that contain more
than one vertex (see \cite[Thm.  1]{JS}), and hence must be of the
claimed form.   \end{proof}

Characterizing general connected graphs $G$ with the property that $q(G) =$ \\ 
$|V(G)|-1$ appears to be
rather more complicated. By Theorem~\ref{Cor:graphsswithhighq}, we can restrict 
attention to certain graphs with $\M(G) =2$. Fortunately, the graphs with $\M(G) =2$ have been characterized in~\cite{JLS} and they
include the graphs known as graphs of two parallel paths. A
graph $G$ is {\em a graph of two parallel paths} if there exist two disjoint
induced paths (each on at least one vertex) that cover the vertices of $G$ and any
edge between these two paths can be drawn so as not to cross other edges (that is,
there exists a planar embedding of $G$). The graphs $S_{m,n}$ described in Section~\ref{sec:bipartite} are examples of graphs of two
parallel paths that satisfy $q(S_{m,n} ) < |V(S_{m,n})| -1$.
Using \cite{JLS}, our investigation reduces to testing, which graphs $G$ either of two parallel
paths or from the exceptional list given in \cite{JLS}, satisfy $q(G) = |V(G)|-1$. 

We first, consider those graphs identified as exceptional type in \cite[Fig. B1]{JLS}. We let 
$C_5$, $C_5'$ and $C_5''$ denote the graphs pictured in Figure \ref{exp-type}, and refer to them
as base exceptional graphs, from which all other exceptional graphs can be formed by attaching
paths of various lengths to the five vertices in each of $C_5$, $C_5'$ and $C_5''$.

\begin{figure}[h]
\begin{center}
\begin{tabular}{ccc}
\begin{tikzpicture}
\draw [fill] 
-- (-1,0)  circle (2pt) node[above]{$1$}
-- (0,0)  circle (2pt) node[above]{$5$}
-- (1,0)  circle (2pt) node[above]{$4$}
-- (.5,-1)  circle (2pt) node[below]{$3$}
-- (-.5,-1)  circle (2pt) node[below]{$2$};
\draw (-1,0)--(-.5,-1);
\end{tikzpicture}
&
\begin{tikzpicture}
\draw [fill] 
-- (-1,0)  circle (2pt) node[above]{$1$}
-- (0,0)  circle (2pt) node[above]{$5$}
-- (1,0)  circle (2pt) node[above]{$4$}
-- (.5,-1)  circle (2pt) node[below]{$3$}
-- (-.5,-1)  circle (2pt) node[below]{$2$};
\draw (-1,0)--(-.5,-1);
\draw (0,0)--(.5,-1);
\end{tikzpicture}
&
\begin{tikzpicture}
\draw [fill] 
-- (-1,0)  circle (2pt) node[above]{$1$}
-- (0,0)  circle (2pt) node[above]{$5$}
-- (1,0)  circle (2pt) node[above]{$4$}
-- (.5,-1)  circle (2pt) node[below]{$3$}
-- (-.5,-1)  circle (2pt) node[below]{$2$};
\draw (-1,0)--(-.5,-1)--(0,0)--(.5,-1);
\end{tikzpicture}\\
$C_5$ & $C_5'$ & $C_5''$
\end{tabular}
\caption{Base Exceptional Graphs} 
\label{exp-type}
\end{center}
\end{figure}
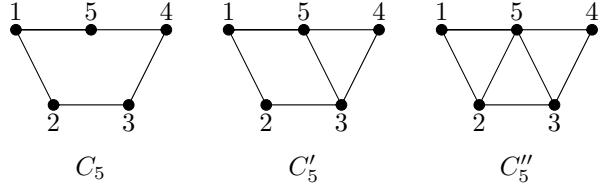

\begin{lemma}\label{coreexp}
Each of the graphs $C_5$, $C_5'$ and $C_5''$ in Figure \ref{exp-type} satisfy
\[ q(C_5) =q(C_5') = q(C_5'') = 3. \]
\end{lemma}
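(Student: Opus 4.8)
The plan is to prove matching lower and upper bounds, handling $C_5$ by a direct citation and $C_5'$, $C_5''$ by the same two-sided argument.

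The graph $C_5$ in Figure~\ref{exp-type} is exactly the $5$-cycle on its five vertices (its edges are $\{1,5\},\{5,4\},\{4,3\},\{3,2\},\{1,2\}$), so Lemma~\ref{lem:cycles} gives $q(C_5)=\lceil 5/2\rceil=3$ with no further work. For $C_5'$ and $C_5''$ I would first establish $q\ge 3$. In both graphs the vertices $1$ and $4$ are non-adjacent and have vertex $5$ as their \emph{only} common neighbour, so $1,5,4$ is the unique shortest path between them and it has length $2$. Theorem~\ref{thm:uniqueminpath} then forces $q(C_5')\ge 3$ and $q(C_5'')\ge 3$ (equivalently, Corollary~\ref{cor:22adj} rules out $q=2$ since these two non-adjacent vertices share fewer than two common neighbours).

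For the upper bound I would exhibit, in each case, a matrix in $S(\cdot)$ with exactly three distinct eigenvalues. The construction uses the order-two automorphism that each graph carries: $\tau=(1\,4)(2\,3)$ fixing $5$ on $C_5''=K_1\vee P_4$, and $\sigma=(1\,2)(3\,5)$ fixing $4$ on $C_5'$. Restricting attention to matrices $A\in S(\cdot)$ that commute with the corresponding permutation, $A$ decomposes on the symmetric and antisymmetric eigenspaces of that permutation into a $3\times 3$ block $M_s$ and a $2\times 2$ block $M_a$, whose eigenvalues together are those of $A$. The single off-diagonal entry of $M_a$ corresponds to an edge and so is forced nonzero; hence $M_a$ automatically has two \emph{distinct} eigenvalues, which I fix to be $\pm 1$. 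The surviving free parameters (the symmetrized diagonal entries and the edge weights appearing in $M_s$) are then tuned so that the three eigenvalues of $M_s$ lie in the three-element set $\{1,-1,\mu_3\}$; concretely this is two conditions on $M_s$ (that $\pm 1$ be roots of its characteristic polynomial), expressed through its trace, its sum of $2\times 2$ principal minors, and its determinant, and the parameter count leaves room to solve them.

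Carrying this out produces, for instance,
$$A'' = \begin{pmatrix} 0 & 1 & 0 & 0 & 1 \\ 1 & 2 & 2 & 0 & 1 \\ 0 & 2 & 2 & 1 & 1 \\ 0 & 0 & 1 & 0 & 1 \\ 1 & 1 & 1 & 1 & 1 \end{pmatrix}\in S(C_5''), \qquad A' = \begin{pmatrix} 2 & 2 & 0 & 0 & -4 \\ 2 & 2 & -4 & 0 & 0 \\ 0 & -4 & 1 & 4 & 1 \\ 0 & 0 & 4 & 4 & 4 \\ -4 & 0 & 1 & 4 & 1 \end{pmatrix}\in S(C_5'),$$
whose characteristic polynomials one checks to be $(\lambda-1)^2(\lambda+1)^2(\lambda-5)$ and $(\lambda-4)^2(\lambda+4)^2(\lambda-10)$, each with exactly three distinct roots; thus $q(C_5')=q(C_5'')=3$. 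The only real obstacle is the upper bound: the lower bound is immediate, whereas producing the three-eigenvalue matrices requires the symmetry reduction above to keep the eigenvalue-matching system small enough to solve by hand.
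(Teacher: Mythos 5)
Your proposal is correct, and for the upper bound it takes a genuinely different route from the paper. The lower bound is the same in both: vertices $1$ and $4$ are non-adjacent with $5$ as their unique common neighbour, so Theorem~\ref{thm:uniqueminpath} gives $q\geq 3$ for $C_5'$ and $C_5''$, and $q(C_5)=3$ comes from Lemma~\ref{lem:cycles}. For the upper bound the paper borders the Laplacian $L$ of $P_4$ (spectrum $\{0,2,2\pm\sqrt{2}\}$) by a vector $b$ and scalar $a$ chosen so that the $5\times 5$ matrix has spectrum $\{0,0,2,2,\lambda\}$; this forces $b=Lu=(L-2I)w$ with $u,w$ in the span of two eigenvectors of $L$, leads to irrational entries that the authors do not display, and is carried out only for $C_5''$, with $C_5'$ dispatched by ``similar arguments.'' You instead use the order-two automorphism of each graph to split a symmetry-invariant matrix into a $3\times 3$ and a $2\times 2$ block and then match spectra, which yields explicit integer matrices. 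I checked your matrices: both have the correct zero--nonzero pattern for $S(C_5'')$ and $S(C_5')$ respectively, both commute with the stated permutations, and the block decomposition gives $M_a$ with spectrum $\{1,-1\}$ (resp.\ $\{4,-4\}$) and $M_s$ with spectrum $\{1,-1,5\}$ (resp.\ $\{4,-4,10\}$), so the characteristic polynomials are $(\lambda-1)^2(\lambda+1)^2(\lambda-5)$ and $(\lambda-4)^2(\lambda+4)^2(\lambda-10)$ as claimed. Your version buys explicitness and a complete, verifiable treatment of $C_5'$; the paper's version illustrates a more general bordering technique (prescribing which eigenvalues of the principal submatrix survive with multiplicity) that does not depend on the graph having a nontrivial automorphism. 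One small point: your remark that the off-diagonal entry of $M_a$ ``corresponds to an edge'' is true here only because exactly one of the two relevant pairs (e.g.\ $\{1,2\}$ versus $\{1,3\}$ in $C_5''$) is an edge, so the difference $a_{12}-a_{13}$ reduces to a single forced-nonzero entry; this is worth saying explicitly, though it does not affect the proof since the explicit matrices stand on their own.
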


\begin{proof}
We already know that $q(C_5)=3$. For the remaining equalities it is enough to demonstrate 
the existence of a matrix with three distinct eigenvalues, since using Theorem~\ref{thm:uniqueminpath} implies $q(C_5'), q(C_5'')\geq 3$. Consider $C_5''$ first. Let $A \in S(C_5'')$ be of the form
\[ A = \left[ \begin{array}{c|c}
L & b \\ \hline & \\
 b^T & a \end{array} \right], \]
where
\[ L = \left[ \begin{array}{cccc}
1 & -1 & 0 & 0 \\
-1 & 2 & -1 & 0 \\
0 & -1 & 2 & -1 \\
0 & 0 & -1 & 1 \end{array} \right] \] is the Laplacian matrix for a path on four vertices, namely
$\{1,2,3,4\}$. We will determine $a$ and $b$ based on some conditions in what follows. It is not
difficult to check that the eigenvalues of $L$ are $\{0,2, 2\pm \sqrt{2}\}$. The objective here is
to choose $a$ and $b$ so that $A \in S(C_5'')$ and the eigenvalues of $A$ are $\{ 0,0,2,2,\lambda\}$ ($\lambda \neq 0, 2$). This can be accomplished by satisfying the following conditions:
\begin{enumerate}
\item $b = Lu = (L-2I)w$ for some real vectors $u,w$;
\item $a = u^T Lu = w^T (L-2I)w +2$; and
\item $b$ has no zero entries.\end{enumerate}
If the eigenvectors of $L$ associated with $ 2 \pm \sqrt{2}$ are $x_2$ and $x_3$, respectively, 
then we know that $u,w$ must be in the span of $\{x_2, x_3\}$. Hence we can write 
\[ u = \alpha_1 x_2 + \beta_1 x_3,\; {\rm and} \; w = \alpha_2 x_2 + \beta_2 x_3, \]
for some scalars $\alpha_1, \beta_1, \alpha_2, \beta_2$. In this case, (2) can be re-written
as 
\[ (2+ \sqrt{2})\alpha_1^2+ (2-\sqrt{2})\beta_1^2 = \sqrt{2}\alpha_2^2 - \sqrt{2}\beta_1^2 +2,\]
and (1) can be re-written as 
\[ (2+ \sqrt{2})\alpha_1 x_2+ (2-\sqrt{2})\beta_1 x_3 = \sqrt{2}\alpha_2 x_2 - \sqrt{2}\beta_1 x_3.\]
Since $\{x_2,x_3\}$ forms a linearly independent set of vectors, we have
\[ \alpha_2 = \left( \frac{2+\sqrt{2}}{\sqrt{2}}\right) \alpha_1, \; {\rm and} \;
\beta_2 = \left( \frac{\sqrt{2}-2}{\sqrt{2}}\right) \beta_1.\]
Substituting these values back into (2) gives,
\[ (2+ \sqrt{2})\alpha_1^2 = (2-\sqrt{2})\beta_1^2 - \sqrt{2}.\]
Thus choosing $\beta_1$ large enough will suffice in satisfying all of the conditions (1)-(3) 
above. For example, if 
\[\beta_1 = 2 \; {\rm and} \; \alpha_1 = - \sqrt{\frac{(2-\sqrt{2})\beta_1^2 - \sqrt{2}}{2+\sqrt{2}}},\] 
then $A$, as constructed above, will have the desired 
form (that is $A \in S(C_5'')$) and with prescribed eigenvalues
$\{0,0,2,2,18-9\sqrt{2}\}$. (The actual entries of $A$ cannot be easily simplified so we 
have not displayed it here.) Hence  $q(C_5'')=3$. Similar arguments can be applied to the
graph $C_5'$, to conclude that 
$q(C_5')=3$ as well.
\end{proof}

In fact, using the above techniques, and the results obtained thus far we may deduce the following
result.

\begin{theorem}\label{main-atmost5}
For connected graphs $G$ on at most five vertices only the graphs from Propositions
\ref{q:n-1-triangle} and \ref{Cor:treeswithhighq} satisfy $q(G)=|V(G)|-1$.
\end{theorem}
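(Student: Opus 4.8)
The plan is to use the structural reduction of Theorem~\ref{Cor:graphsswithhighq} to cut the problem down to a finite, explicit check. Since $q(G)=|V(G)|-1$ forces $\M(G)=2$, every candidate graph lies in the class of graphs with maximum multiplicity two, which by~\cite{JLS} consists precisely of the graphs of two parallel paths together with the exceptional graphs of~\cite[Fig.~B1]{JLS}. So it suffices to run through the connected graphs on at most five vertices in these two categories and decide, for each, whether $q(G)=|V(G)|-1$. For $n\le 4$ there are no exceptional graphs (the base exceptional graphs already have five vertices) and only a handful of two-parallel-path graphs, so these cases can be dispatched by inspection, recovering $K_3$ (Proposition~\ref{q:n-1-triangle} with $n=3$) and, on four vertices, the paw and the star $K_{1,3}$, both of which lie in the two families. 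The real content is the case $n=5$.

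First I would eliminate the exceptional graphs. On at most five vertices the only exceptional graphs are the three base graphs $C_5$, $C_5'$, $C_5''$, each on exactly five vertices, since every other exceptional graph is built by attaching nontrivial paths and hence has more vertices. By Lemma~\ref{coreexp} each of these has $q=3$, whereas $|V|-1=4$, so none satisfies $q(G)=|V(G)|-1$. Next I would dispose of all trees at once: Lemma~\ref{treeqn-1} already shows that the only trees with $q(T)=|V(T)|-1$ are a path with a single pendant vertex attached at an interior vertex, which is exactly the family of Proposition~\ref{Cor:treeswithhighq}, and no further computation is needed.

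This leaves the non-tree graphs of two parallel paths on five vertices (those with $\M(G)=2$ that are neither trees nor exceptional). I would enumerate these finitely many graphs directly from the two-parallel-path description, splitting the five vertices into two induced paths and adding the allowed non-crossing cross edges. For each such $G$ the lower bound comes from Theorem~\ref{thm:uniqueminpath}: a unique shortest path of length $d$ gives $q(G)\ge d+1$, and this pins down the two graphs of Proposition~\ref{q:n-1-triangle} (a $P_5$ with one added chord $\{v_i,v_{i+2}\}$) as having $q=4$. For every remaining graph I would exhibit a matrix $A\in S(G)$ with at most three distinct eigenvalues, forcing $q(G)\le 3=|V(G)|-2$ and excluding it; wherever a large clique is present the bound $q(G)\le \cc(G)+1$ of Corollary~\ref{cor:cliquecovering} already suffices, and the genuinely tight cases are handled by an explicit spectral construction.

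The hard part will be precisely these explicit constructions for the borderline five-vertex graphs, such as a triangle carrying two pendants at the same vertex, or a four-cycle with a pendant, where a unique shortest path of length two only yields $q\ge 3$ and one must actually produce a matrix attaining three distinct eigenvalues. This is exactly the situation met in Lemma~\ref{coreexp}: prescribe a target spectrum with one repeated eigenvalue, build the matrix on the path part using the inverse eigenvalue problem for tridiagonal (Jacobi) matrices, and solve for the remaining entries (the $b=Lu=(L-2I)w$ system there) while enforcing that all required off-diagonal entries are nonzero, so that $A$ genuinely lies in $S(G)$. Once each of the finitely many non-family graphs is shown to admit such a matrix with three distinct eigenvalues, the enumeration is complete and the theorem follows.
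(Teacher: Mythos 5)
Your proposal is correct and follows essentially the same route the paper intends: the paper gives no written proof of Theorem~\ref{main-atmost5}, asserting only that it follows ``using the above techniques, and the results obtained thus far,'' and your reduction via Theorem~\ref{Cor:graphsswithhighq} and \cite{JLS} to a finite check of two-parallel-path and exceptional graphs --- dispatched by Lemma~\ref{coreexp}, Lemma~\ref{treeqn-1}, Theorem~\ref{thm:uniqueminpath}, and explicit constructions in the style of Lemma~\ref{coreexp} --- is precisely that argument spelled out. The only work you leave implicit, namely the explicit three-distinct-eigenvalue matrices for the few borderline five-vertex two-parallel-path graphs, is likewise left implicit in the paper.
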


We have poured considerable effort into extending the above fact to larger orders, but
this still has not been resolved. However, we have a strong suspicion that this fact can
be extended. For instance, by Lemma \ref{treeqn-1}, this is true for trees. Furthermore,
using Lemma \ref{coreexp} and considering the result in the previous section on coronas,
we feel strongly that all of the exceptional graphs $G$ listed in \cite[Fig. B1]{JLS} satisfy
$q(G)< |V(G)|-1$.

\section{Possible future directions}

There are many open questions concerning the minimum number
of distinct eigenvalues of a graph. In this section we list some of them that we find interesting and provide some possible directions along these lines.

We have seen that adding an edge or a vertex can dramatically change
the minimum number of distinct eigenvalues of a graph but we suspect that
adding a pendant vertex to a graph could increase the minimum number
of distinct eigenvalues by at most one. The next problem that we plan to work
on is to determine how adding pendant vertices to a graph affects
the minimum number of distinct eigenvalues.

We are also interested in how other graph operations affect the minimum number
of distinct eigenvalues.  For example, can we determine the minimum number of
distinct eigenvalues of a graph that is the vertex sum of two graphs?
Or what is the value of $q(G_1 \join G_2)$ or $q(G \join G \join G)$ in general?
Similarly, does Theorem \ref{join:thm} still hold if $G$ is disconnected?
We formulate the following unresolved idea for the join of two distinct graphs.
  If $G_1$ and $G_2$ are connected graphs and $|q(G_1) -q(G_2)|$ is
  small, then is $q(G_1 \join G_2)=2$?

Another unresolved issue deals with strongly-regular graphs. For
any nonempty strongly-regular graph $G$, it is clear that $2 \leq q(G)\leq3$. Thus a key
 question is
which strongly-regular graphs satisfy $q(G) =2$? The complete bipartite
graph $K_{n,n}$ and $K_n$ are examples of such graphs. By
Corollary~\ref{cor:22adj}, if $G$ is a strongly-regular graph with
parameters $(n,k,a,c)$ (see \cite[Chap. 5]{BR}), where $c$ is the number of mutual neighbors of
any two non-adjancent vertices, and $q(G) = 2$, then $c \geq 2$ (but this is
hardly a strong restriction). However, this restriction on $c$ does verify that the minimum number of distinct eigenvalues for the Petersen graph is three. In addition the 
complete multi-partite graphs of the form $G=K_{n_1, n_1, n_2,n_2, \dots, n_k, n_k}$, where $n_i$ are arbitrary positive numbers, have $q(G)=2$; because $G$ is the join of $K_{n_1,n_2,\dots, n_k}$ with itself; and obviously $G$ is strongly regular.
Also, $q(K_{2,2,2})=2$ as a $6 \times 6$ real symmetric orthogonal matrix can be constructed with $2 \times 2$ zero blocks on the diagonal and Hadamard-like $2 \times 2$ matrices off the diagonal. Observe that $K_{2,2,2}$  is also strongly regular but is not the join of a graph with itself. At present we are still not sure about $q(K_{2,2,\dots,2})$ or $q(K_{3,3,3})$.

Finally, the last outstanding issue is the
characterization of all graphs $G$ for which $q(G) =
|V(G)|-1$. Towards this end, as we eluded to in Section 7, we presented a 
number of ideas and directions towards a general characterization.

\bigskip
{\bf Acknowledgment:} We would like to thank Dr. Francesco Barioli and Dr. Robert Bailey for
a number of interesting discussions related to this topic, and other connections to
certain spectral graph theory problems.


\end{document}